\definecolor{indigo}{rgb}{0.29, 0.0, 0.51}
\definecolor{p1}{gray}{0.4}
\definecolor{p2}{gray}{0.6}
\definecolor{p3}{gray}{0.98}
\definecolor{p4}{gray}{0.8}
\definecolor{p5}{gray}{0.9}
\newcommand{\HL}{\mathrm{HL}}
\def\eps{\varepsilon}
\def\vp{\varphi}
\newcommand{\cM}{\mathcal{M}}
\newcommand{\cN}{\mathcal{N}}
\newcommand{\cH}{\mathcal{H}}
\def\B{{B}}
\def\n{{\mathcal M}}
\def\N{{\mathbb N}}
\def\n{{\mathcal{M}}}
\def\n{{\mathcal N}}
\def\S{{\mathbb S}}
\DeclarePairedDelimiter{\floor}{\lfloor}{\rfloor}
\renewcommand{\div}{{\rm div}}
\newcommand{\dv}{\operatorname{div}}
\newtheorem{theorem}{Theorem}
\newtheorem{lemma}[theorem]{Lemma}
\newtheorem{corollary}[theorem]{Corollary}
\newtheorem{proposition}[theorem]{Proposition}
\newtheorem{remark}[theorem]{Remark}
\newtheorem{definition}[theorem]{Definition}
\newtheorem{example}[theorem]{Example}
\newtheorem{openproblem}[theorem]{Open Problem}
\newcommand{\dif}{\,\mathrm{d}}
\newcommand{\dx}{\dif x}
\newcommand{\dd}{\dif}
\newcommand{\R}{\mathbb{R}}
\newcommand{\pl}{\partial}
\newcommand{\ale}{\lesssim}
\newcommand{\brac}[1]{\left (#1 \right )}
\newcommand{\abs}[1]{\left |#1 \right |}
\newcommand{\ov}[1]{\overline{#1}}
\newcommand{\la}{\left\langle}
\newcommand{\ra}{\right\rangle}
\newcommand{\barint}{
\rule[.036in]{.12in}{.009in}\kern-.16in \displaystyle\int }
\newcommand{\barcal}{\mbox{$ \rule[.036in]{.11in}{.007in}\kern-.128in\int $}}
\def\mvint_#1{\mathchoice
          {\mathop{\vrule width 6pt height 3 pt depth -2.5pt
                  \kern -8pt \intop}\nolimits_{\kern -3pt #1}}%
          {\mathop{\vrule width 5pt height 3 pt depth -2.6pt
                  \kern -6pt \intop}\nolimits_{#1}}%
          {\mathop{\vrule width 5pt height 3 pt depth -2.6pt
                  \kern -6pt \intop}\nolimits_{#1}}%
          {\mathop{\vrule width 5pt height 3 pt depth -2.6pt
                  \kern -6pt \intop}\nolimits_{#1}}}
\numberwithin{theorem}{section} \numberwithin{equation}{section}
\def\XXint#1#2#3{{\setbox0=\hbox{$#1{#2#3}{\int}$}
     \vcenter{\hbox{$#2#3$}}\kern-.5\wd0}}
\let\latexchi\chi
\renewcommand\chi{\@ifnextchar_\sub@chi\latexchi}
\newcommand{\sub@chi}[2]{
  \@ifnextchar^{\subsup@chi{#2}}{\latexchi^{}_{#2}}%
}
\newcommand{\subsup@chi}[3]{
  \latexchi_{#1}^{#3}%
}
\title[Regularity of $p$-harmonic maps]{Regularity of minimizing $p$-harmonic maps into spheres and sharp Kato inequality}
\author{Katarzyna Mazowiecka}
\address[Katarzyna Mazowiecka]{
Institute of Mathematics,%
University of Warsaw,
Banacha 2,
02-097 Warszawa, Poland}
\email{k.mazowiecka@mimuw.edu.pl}
\author{Micha\l{} Mi\'{s}kiewicz}
\address[Micha\l{} Mi\'{s}kiewicz]{
Institute of Mathematics,%
University of Warsaw,
Banacha 2,
02-097 Warszawa, Poland
\newline
\&
Institute of Mathematics, Polish Academy of Sciences, \'{S}niadeckich 8, 00-656 Warszawa, Poland}
\email{m.miskiewicz@mimuw.edu.pl}
\begin{document}

\begin{abstract}
We study regularity of minimizing $p$-harmonic maps $u \colon B^3 \to \S^3$ for $p$ in the interval $[2,3]$. For a long time, regularity was known only for $p = 3$ (essentially due to Morrey \cite{Morrey}) and $p = 2$ (Schoen--Uhlenbeck \cite{SU3}), but recently Gastel \cite{Gastel19} extended the latter result to $p \in [2,2+\frac{2}{15}]$ using a version of Kato inequality. Here, we establish regularity for a small interval $p\in [2.961,3]$ by combining Morrey's methods with Hardt and Lin's Extension Theorem \cite{HLp}. We also improve on the other result by obtaining regularity for $p \in [2,p_0]$ with $p_0 = \frac{3+\sqrt{3}}{2} \approx 2.366$. In relation to this, we address a~question posed by Gastel and prove a sharp Kato inequality for $p$-harmonic maps in two-dimensional domains, which is of independent interest. 
\end{abstract}
 \keywords{$p$-harmonic map, Kato inequality, regularity theory}
\sloppy

\subjclass[2020]{35J92, 58E20, 53C43}
\maketitle

\tableofcontents
\sloppy

\section{Introduction}

The object of our study are minimizing $p$-harmonic maps, which are defined as minimizers of the Dirichlet $p$-energy
\begin{equation}\label{def:p-energy}
 E_p(u) \coloneqq \int_{\cM} |\nabla u|^p \dif V \qquad \text{for } u \in W^{1,p}(\cM,\cN)
\end{equation}
with fixed boundary data $u|_{\pl\cM}$, where $\cM$, $\cN$ are smooth Riemannian manifolds, $\cM$ has a non-empty boundary and $\cN$ is closed. The regularity theory of these maps started with studying the more classical case $p = 2$, in which they are simply called \emph{minimizing harmonic maps}. The classical result of Morrey \cite{Morrey} states that all minimizing harmonic maps are regular if $n = 2$, $n$ being the domain dimension. In the seminal paper \cite{SU1} Schoen and Uhlenbeck showed that in higher dimensions, these maps are regular in the interior outside of a singular set of Hausdorff dimension at most $n-3$. Moreover, the singular set is discrete if $n=3$. 

The result of Schoen and Uhlenbeck \cite{SU1} is sharp: for $3$-dimensional domains minimizing harmonic maps may have point singularities, as shown by the ``hedgehog'' map
\[
 u \colon B^3 \to \S^2, \quad u(x) = \frac{x}{|x|};
\]
see, e.g., \cite{BCL} for the proof of minimality. Similar examples are available for general $n \ge 3$, showing that the singular set may indeed be $(n-3)$-dimensional. 

Since singularities may appear, one has to be careful with the statement of the problem. Postponing a detailed discussion until Section \ref{s:preliminaries}, let us mention that we assume $\cN \subset \R^d$ to be isometrically embedded, and $W^{1,p}(\cM,\cN)$ is defined as the set of all $u \in W^{1,p}(\cM,\R^d)$ satisfying $u(x) \in \cN$ a.e. Moreover, since the regularity considerations are local in nature, we may assume for simplicity --- as is customary in the literature --- that $\cM = \Omega \subset \R^n$ is a~Euclidean domain. 

Although the cited result is sharp in general, regularity properties of harmonic maps depend crucially on the geometry of the target manifold $\cN$. For example, solutions are regular if $\cN$ has non-positive sectional curvature \cite{SU1}, see also related earlier works \cite{Eells-Sampson,HKW}. In a~subsequent paper Schoen and Uhlenbeck \cite{SU3} studied the case when the target manifold is a sphere $\cN =\S^{k}$. Their result, combined with a further improvement of Okayasu \cite{Okayasu94}, gives:
\begin{theorem}[\cite{SU3}, \cite{Okayasu94}]
 Any minimizing harmonic map $u \colon B^n \to \S^k$ is regular whenever $n \le d(k)$, where 
 \[
  d(k) = 
  	\begin{cases}
                  k &\text{ for } 2 \le k\le 5\\
                  5 &\text{ for } 6 \le k \le 9\\
                  6 &\text{ for } k \ge 10.
  	\end{cases}
 \]
\end{theorem}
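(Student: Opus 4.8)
The plan is to reduce regularity to a classification of homogeneous minimizers and then settle that classification by a second-variation computation. I would start from the two standard pillars available for minimizers into $\S^k\subset\R^{k+1}$: the monotonicity formula, by which $r\mapsto r^{2-n}\int_{B_r(a)}|\nabla u|^2$ is nondecreasing, and the $\eps$-regularity theorem, by which smallness of this normalized energy at some scale forces $u$ to be smooth near $a$. Combined with the $W^{1,2}$-compactness of minimizers, these yield at each $a\in\sing u$ a nonconstant tangent map $u_0=\phi(x/|x|)$, homogeneous of degree $0$ and itself minimizing, with $\phi\colon\S^{n-1}\to\S^k$ harmonic. Thus $a$ is singular precisely when a nonconstant homogeneous minimizer $B^n\to\S^k$ exists.

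Next I would run Federer's dimension-reduction scheme. Writing $n^*(k)$ for the least dimension $m$ admitting a nonconstant homogeneous minimizer $B^m\to\S^k$, it gives $\dim\sing u\le n-n^*(k)$, so that $\sing u=\emptyset$ whenever $n<n^*(k)$. The theorem then amounts to the identity $n^*(k)=d(k)+1$, which splits into an existence task (produce a nonconstant homogeneous minimizer in dimension $d(k)+1$) and a non-existence task (show every homogeneous minimizer in dimensions $\le d(k)$ is constant).

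For existence, two model maps govern the extremes. For small $k$ the obstruction is topological: the degree-one cone $x/|x|\colon B^{k+1}\to\S^k$ has finite energy and non-extendable boundary data, hence is minimizing, which accounts for $d(k)=k$ in the range $2\le k\le 5$. For large $k$ the obstruction is analytic, coming from the equator cone $x/|x|\colon B^n\to\S^{n-1}\subset\S^k$: here $|\nabla u_0|^2=(n-1)/|x|^2$, and testing the second variation with $\xi=\psi(x)\,e$ for a fixed unit vector $e$ normal to the equator reduces stability to the inequality $\int|\nabla\psi|^2\ge(n-1)\int\psi^2/|x|^2$. The sharp Hardy constant $(n-2)^2/4$ makes this hold for all admissible $\psi$ exactly when $(n-2)^2\ge 4(n-1)$, i.e. $n\ge 7$; for $n\le 6$ the negative direction of the second variation produces an energy-decreasing competitor, ruling out minimality. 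This computation is what pins down $d(k)=6$ for $k\ge 10$.

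The main obstacle is the intermediate range $6\le k\le 9$, where neither model map is decisive: one must simultaneously locate the genuine minimizing eigenmaps $\S^5\to\S^k$ responsible for $n^*(k)=6$ and exclude all other homogeneous minimizers in that dimension. This is precisely the refinement of Okayasu over the original Schoen--Uhlenbeck bound, and I expect it to require computing the spectrum of the Jacobi operator $-\Delta_{\S^{n-1}}-|\nabla_\omega\phi|^2$ along candidate eigenmaps and comparing its bottom against the Hardy threshold $-(n-2)^2/4$, rather than relying on a single explicit map. The genuinely hard step should be the \emph{non-existence} direction --- showing that below the threshold every homogeneous minimizer is forced to be constant --- since it demands an instability estimate uniform over all admissible boundary maps $\phi$, not merely for the explicit equator and degree cones.
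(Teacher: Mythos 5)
Your framework --- monotonicity, $\eps$-regularity, compactness, tangent maps, and Federer dimension reduction --- matches the standard reduction that this theorem (and the paper, via Hardt--Lin's Theorem 4.5) relies on: regularity for $n\le d(k)$ follows once every homogeneous minimizer $B^m\to\S^k$ with $m\le d(k)$ is shown to be constant. But that classification step is the entire content of the theorem, and your proposal does not supply it. Analyzing the Jacobi operator of the equator cone and the degree cone only shows that \emph{those two maps} fail to be minimizing in low dimensions; it says nothing about an arbitrary nonconstant homogeneous minimizer, whose restriction $\phi\colon\S^{m-1}\to\S^k$ is an unknown harmonic map with no a priori structure. You acknowledge this yourself in the final paragraph (``an instability estimate uniform over all admissible boundary maps $\phi$''), which is an admission that the key step is missing rather than a proof of it. Note also that the existence half of your proposed identity $n^*(k)=d(k)+1$ is not needed for the stated theorem (which is only an upper bound on where singularities can occur), and your justification of minimality of $x/|x|\colon B^{k+1}\to\S^k$ by ``non-extendable boundary data'' is not valid as written --- minimality of that map is itself a hard theorem (Brezis--Coron--Lieb, Lin, Coron--Gulliver).

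The actual Schoen--Uhlenbeck/Okayasu argument, which the paper sketches in its introduction and adapts in Section 5 for $p>2$, avoids any case analysis of candidates. One tests the second variation not with a single constant normal direction but with fields built from $|\nabla u|$ itself (summed over an orthonormal frame normal to $u(\S^{m-1})$ in $\S^k$, which is where the codimension $k-m+1$ enters and why $d(k)$ improves as $k$ grows). This produces an integral stability inequality of the schematic form $c_1\int|\nabla u|^4-c_2\int|\nabla u|^2\le\int|\nabla|\nabla u||^2$ valid for \emph{every} tangent map. A Bochner-type identity gives the reverse-flavored bound $\int|\nabla^2 u|^2\le\tfrac12\int|\nabla u|^4-\int|\nabla u|^2$, and the two are bridged by a pointwise Kato inequality $|\nabla|\nabla u||^2\le\kappa^{-1}|\nabla^2 u|^2$; Okayasu's contribution, which is what pushes the range from Schoen--Uhlenbeck's original bound to the $d(k)$ stated here, is precisely the improved Kato constant $\kappa=\tfrac{n}{n-1}$ for harmonic maps (item (i) of Proposition 6.4 in this paper). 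The upshot is an inequality $A\int|\nabla u|^4+B\int|\nabla u|^2\le0$ with $A,B>0$ exactly in the range $n\le d(k)$, forcing every tangent map to be constant with no spectral analysis of the Jacobi operator and no enumeration of eigenmaps. Your Hardy-constant computation for the equatorial map is correct (it is the $p=2$ case of Lemma 3.1 in the paper), but it plays no role in the proof of this theorem beyond confirming that the bound cannot be pushed past $n=6$ for large $k$.
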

If we specialize this result to the case $n = k$, we deduce that all minimizing harmonic maps 
\[
 u \colon B^n \to \S^n
\]
must be regular for $n = 2,3,4,5$. It is an open problem whether minimizing harmonic maps $u \colon B^6 \to \S^6$ are continuous. In higher dimensions, J\"{a}ger and Kaul \cite{JK} proved that the (singular) equatorial map defined by
\[
v \colon B^n \to \S^n, \qquad v(x)\coloneqq \brac{\frac{x}{|x|},0}
\]
is a minimizing harmonic map if and only if $n\ge 7$ \cite{JK}. In particular, the equatorial map is \emph{not} a minimizing harmonic map for $n=6$. 

\medskip

Let us return to $p$-harmonic maps for general exponent $p$. Here, regularity theory for minimizing $p$-harmonic maps was developed in analogy to the case $p = 2$, at least to some extent. Hardt and Lin \cite{HLp} showed that minimizing $p$-harmonic maps are regular outside of a singular set, which in general has Hausdorff dimension at most $n - \floor{p} - 1$, and is discrete in the case $n = \floor{p}+1$. In particular, these maps are regular when $p \ge n$; note that this special case can also be obtained by an adaption of Morrey's classical argument. 

This result may be improved for special target manifolds, as shown by Nakauchi \cite{Nakauchi01} and Xin--Yang \cite{XY}. In the case of the target manifold being a sphere we have
\begin{theorem}[\cite{Nakauchi01,XY}]\label{th:nakauxy}
 Any minimizing $p$-harmonic map $u \colon B^n \to \S^k$ is regular whenever $n \le d(k,p)$, where 
 \[
  d(k,p) = 
	\begin{cases}
                  \lfloor p \rfloor + 1 &\text{ for } p+2\le k <p+4\\
                  \lfloor p \rfloor + 2 &\text{ for } p+4\le k <p+6\\
                  \min\{\lfloor p \rfloor + 3,\lambda(p)\} &\text{ for } p+6\le k <p+8\\
                  \min\{\lfloor p \rfloor + 4,\lambda(p)\} &\text{ for } p+8\le k,	
	\end{cases}
 \]
 and $\lambda(p) = p + 2\sqrt{1-\frac{p-2}{(p-1)^2}} + \frac{2}{p-1}$.
\end{theorem}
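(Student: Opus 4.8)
The plan is to follow the Schoen--Uhlenbeck scheme adapted to the $p$-energy: reduce interior regularity to a Liouville-type classification of homogeneous minimizing tangent maps, and then exclude the nonconstant ones by a second variation argument whose borderline dimension turns out to be exactly $\lambda(p)$. By Hardt--Lin \cite{HLp}, every minimizing $p$-harmonic map is smooth away from a closed set $\sing u$ with $\dim_{\HI}\sing u\le n-\floor{p}-1$. I would then invoke the dimension-reduction machinery for the $p$-energy --- monotonicity of the scaled energy $r^{p-n}\int_{B_r}\abs{\nabla u}^p$, $W^{1,p}$-compactness of minimizers, and existence of homogeneous tangent maps, all available through Hardt--Lin \cite{HLp} together with Luckhaus' compactness lemma. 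This reduces the theorem to the assertion that for every $m\le d(k,p)$ there is no nonconstant minimizing $p$-harmonic map $u\colon\R^m\to\S^k$ which is homogeneous of degree zero, i.e.\ $u(x)=\omega(x/\abs x)$ with $\omega\colon\S^{m-1}\to\S^k$. Indeed, if such maps are excluded for all $m\le n$, an iterated tangent map at any putative singular point is forced to be constant, whence $\sing u=\emptyset$ whenever $n\le d(k,p)$.

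To exclude nonconstant homogeneous minimizers I would use that a minimizer is stable, so its second variation of $E_p$ is nonnegative. Writing variations through the nearest-point projection, $u_t=\pi_{\S^k}(u+t\,V)$ with $V$ a vector field along $u$ tangent to $\S^k$, the stability inequality takes the schematic form
\begin{equation*}
 \int \abs{\nabla u}^{p-2}\Big(\abs{\nabla V}^2 + (p-2)\frac{\la \nabla u,\nabla V\ra^2}{\abs{\nabla u}^2} - \abs{\nabla u}^2\abs{V}^2\Big)\dx \ge 0,
\end{equation*}
the last term encoding the positive curvature of the target and the middle term the anisotropy of the $p$-energy. The decisive device is Xin's symmetrization: take $V_A=\psi\,(e_A-\la e_A,u\ra u)$ for a fixed radial test function $\psi$, insert this into the stability inequality for each $e_A$ in an orthonormal basis of $\R^{k+1}$, and sum over $A$. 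Using $\sum_A\abs{e_A-\la e_A,u\ra u}^2=k$ together with the analogous contractions of the two gradient terms, the $V$-dependence collapses onto the single scalar $\psi$, leaving an inequality involving only $\psi$, $\abs{\nabla u}$, $p$, and $k$.

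Next I would exploit homogeneity. With $x=r\theta$ one has $\abs{\nabla u}^2=r^{-2}\abs{\nabla_\theta\omega}^2$, so the symmetrized inequality separates variables: choosing $\psi=\psi(r)$ and integrating out the angular variable over $\S^{m-1}$ reduces everything to a one-dimensional weighted inequality on $(0,\infty)$, whose $r$-weight is fixed by the homogeneity and the measure $r^{m-1}\dif r$. Testing with $\psi(r)=r^{\alpha}$ (suitably cut off near $0$ and $\infty$) turns this into a quadratic condition in the exponent $\alpha$; a destabilizing $\alpha$ exists --- contradicting stability unless $\omega$ is constant --- precisely when the discriminant has the right sign, and the borderline domain dimension is exactly $m=\lambda(p)=p+2\sqrt{1-\frac{p-2}{(p-1)^2}}+\frac{2}{p-1}$, where the $\frac{p-2}{(p-1)^2}$ originates from the anisotropic $(p-2)$-term. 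As a consistency check, $\lambda(2)=6$, recovering the harmonic-map threshold of Schoen--Uhlenbeck--Okayasu. This yields the bound $d(k,p)\le\lambda(p)$ in the regime where the curvature term is fully effective (large $k$). The complementary, purely dimensional bounds $\floor{p}+j$ for the intermediate ranges of $k$ arise because the strength of the symmetrized destabilization is controlled by the number of target directions $e_A$ normal to the image of $\omega$ --- essentially the codimension $k-m+1$ --- so that for $k$ close to $p$ only the weaker dimensional mechanism (the Hardt--Lin estimate combined with an induction on $m$ through the link $\omega$) survives; taking the better of the two bounds produces the stated $\min\{\floor{p}+j,\lambda(p)\}$.

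The routine but delicate part is the bookkeeping in the symmetrization step: carrying the anisotropic $(p-2)$-term through the sum and contracting $\sum_A\la\nabla u,\nabla V_A\ra^2$ correctly, since it is exactly this term that generates the square root in $\lambda(p)$ and hence the sharp constant. The genuine obstacle, however, is securing the dimension-reduction step for $p\ne2$: unlike the harmonic case one cannot appeal to the well-developed Federer--Almgren--Simon theory directly but must rely on the $W^{1,p}$-compactness and tangent-map analysis of Hardt--Lin \cite{HLp} (and Luckhaus), and verify that the link $\omega$ of a homogeneous minimizer is itself minimizing in the appropriate sense, so that the induction closing the $\floor{p}+j$ cases is legitimate.
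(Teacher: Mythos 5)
First, be aware that the paper offers no proof of this statement: it is quoted from Nakauchi \cite{Nakauchi01} and Xin--Yang \cite{XY}, so your sketch can only be compared with the strategy of those references, which the authors summarize in their introduction and refine (for $n=k=3$) in \Cref{s:regularitynear2}. Your outer frame is right: dimension reduction to $0$-homogeneous minimizing tangent maps via Hardt--Lin and compactness, then a Liouville theorem obtained from stability with the averaged fields $e_A-\la e_A,u\ra u$; the consistency check $\lambda(2)=6$ is also correct. The mechanism you propose for the Liouville step, however, is not the one that works, and as written it fails.

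The gap is in the choice of scalar factor and in two missing analytic ingredients. You multiply $e_A-\la e_A,u\ra u$ by a \emph{radial} cutoff $\psi(r)$ and test with $\psi=r^\alpha$, claiming the symmetrized inequality separates variables. For $u(x)=\omega(x/\abs{x})$ one has $\abs{\nabla u}=r^{-1}\abs{\nabla_\theta\omega}$, and the summed second variation then produces, after integrating over $\S^{m-1}$, the two \emph{different} angular integrals $\int_{\S^{m-1}}\abs{\nabla_\theta\omega}^{p-2}$ (from the $\abs{\nabla u}^{p-2}\abs{\psi'}^2$ terms) and $\int_{\S^{m-1}}\abs{\nabla_\theta\omega}^{p}$ (from the curvature term $\abs{\nabla u}^{p}\abs{\psi}^2$). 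These do not cancel against each other and cannot be compared for a general $\omega$, so no choice of $\alpha$ yields a contradiction; this device works for the equatorial map (\Cref{le:notmini}) only because $\abs{\nabla_\theta\omega}$ is constant there. The actual proof takes the scalar factor to be (a power of) $\abs{\nabla u}$ itself, which turns stability into an estimate of the shape of \Cref{lem:ingredients}\ref{it:stability}, bounding $\int\abs{\nabla u}^{p+2}$ by $\int\abs{\nabla u}^{p}\,\abs{\nabla\abs{\nabla u}}^2$ plus lower-order terms; one must then bring in two further inputs entirely absent from your sketch, namely the Bochner-type estimate (\Cref{lem:ingredients}\ref{it:bochner}) controlling $\int\abs{\nabla u}^{p-2}\abs{\nabla^2u}^2$ by $\int\abs{\nabla u}^{p+2}$ and $\int\abs{\nabla u}^{p}$, and a Kato inequality relating $\abs{\nabla\abs{\nabla u}}^2$ to $\abs{\nabla^2u}^2$. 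The square root in $\lambda(p)$ --- in particular the term $\frac{p-2}{(p-1)^2}$ --- comes from the quadratic condition in $n$ for the coefficients of the combined inequality to be positive, not from a discriminant in a radial exponent. Your account of the $\floor{p}+j$ branches via codimension counting and induction on the link is likewise not how those bounds arise; they come from the same three-inequality combination (the curvature coefficient grows with $k$) together with the standard dimension-reduction induction. Without the Bochner and Kato ingredients the argument cannot close.
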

Unfortunately, this estimate is not optimal and in particular it leaves an open question whether minimizing $p$-harmonic maps are regular for $u \colon B^n \to \S^n$ and $p \in [n-1,n)$.

In a recent paper Gastel \cite{Gastel19} was exploiting the connection between the theory of Cosserat micropolar elasticity and $p$-harmonic maps. He showed that the only obstacle for the regularity of minimizers in a geometrically nonlinear Cosserat model for micropolar elasticity of continua is the possible existence of non-constant $p$-minimizing tangent maps from $B^3$ into $\S^3$. 

In this setting Gastel improved \Cref{th:nakauxy} by showing that all minimizing $p$-harmonic maps from $B^3$ to $\S^3$ must be regular when $2 \le p \le \frac{32}{15} \approx 2.133$. Combined with the regularity theory of Hardt and Lin \cite{HLp}, it implies that minimizing $p$-harmonic maps from $B^3$ to $\S^3$ must be regular for all $p$ in $[2,\frac{32}{15}] \cup [3,\infty)$. Let us mention that Gastel's results were extended to the case of stable-stationary solutions by Li and Wang \cite{Li-Wang22}.

Our main goal is to study the regularity of minimizing $p$-harmonic maps from $B^3$ to $\S^2$. We obtain: 
\begin{theorem}\label{th:main}
 Let $p_0 = \frac{3+\sqrt{3}}{2}\approx 2.366$ and $p_1 = 3-\frac{17-24\ln(2)}{3^{3/2}2^{5/6}} \approx 2.961$. If $p\in[2,p_0] \cup [p_1,3]$, then any minimizing $p$-harmonic map $u \colon B^3 \to \S^3$ is regular. 
\end{theorem}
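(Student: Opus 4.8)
The plan is to prove both regularity statements through the tangent-map (dimension-reduction) scheme, and then to dispatch the two intervals $[2,p_0]$ and $[p_1,3]$ by two genuinely different comparison arguments. For $p\in[2,3)$ one has $\floor{p}=2$, so by the Hardt--Lin regularity theory \cite{HLp} the singular set of a minimizing $u\colon B^3\to\S^3$ is discrete. By $\eps$-regularity it therefore suffices to show that \emph{every} $0$-homogeneous minimizing $p$-harmonic map $u_0(x)=\phi(x/|x|)$, with $\phi\colon\S^2\to\S^3$, is constant: such a $u_0$ is exactly a tangent map at a putative singular point, it is again minimizing, and it is smooth away from the origin, so its restriction $\phi$ is smooth. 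Two features of $\phi$ will be exploited: since $\pi_2(\S^3)=0$ the map $\phi$ is null-homotopic, and being a smooth map from a surface into a $3$-manifold its image is $\mathcal H^3$-null (Sard), so one may choose a point $q\in\S^3$ with neither $q$ nor its antipode in the image. A direct computation in polar coordinates gives the radial energy
\begin{equation*}
 E_p(u_0;B^3)=\frac{1}{3-p}\int_{\S^2}|\nabla_\S\phi|^p\dif\mathcal H^2,
\end{equation*}
which is the quantity every competitor must undercut.

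For the interval $[p_1,3]$ I would build an explicit competitor $w\colon B^3\to\S^3$ with $w|_{\S^2}=\phi$ and compare energies; this is the quantitative content of the Hardt--Lin Extension Theorem \cite{HLp}. In geodesic polar coordinates $(t,\xi)\in[0,\pi]\times\S^2$ centred at $q$, write $\phi=(D,\Xi)$ and contract $\phi$ to $q$ along a radial profile $f$ with $f(0)=0$, $f(1)=1$, by setting $w(r\omega)=(f(r)\,D(\omega),\Xi(\omega))$. One then computes $|\nabla w|^2=f'(r)^2D^2+r^{-2}\bigl(f^2|\nabla_\S D|^2+\sin^2(fD)\,|\nabla_\S\Xi|^2\bigr)$, which reduces $E_p(w)$, after optimising the profile $f$ and bounding the general case by the extremal equatorial configuration $D\equiv\tfrac\pi2$, $\Xi=\mathrm{id}$, to a one-dimensional variational integral. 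Balancing the optimal filling energy against the radial energy above yields precisely $3-p_1=\frac{17-24\ln(2)}{3^{3/2}2^{5/6}}$, and for $p\ge p_1$ one obtains $E_p(w)<E_p(u_0)$ unless $\phi$ is constant, contradicting minimality.

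For the interval $[2,p_0]$ the point is that $\phi$ is a minimizing $p$-harmonic map whose domain $\S^2$ is \emph{two-dimensional}, so I would run the stability (second-variation) argument of Schoen--Uhlenbeck and Xin--Yang type as in \Cref{th:nakauxy}, but feed into it the sharp Kato inequality established separately in the paper. Concretely, the Bochner identity for $p$-harmonic maps controls $|\nabla|\nabla\phi||$ by $|\nabla^2\phi|$ with some constant; replacing the classical constant $1$ by the sharp two-dimensional constant sharpens the integrability and eigenvalue bounds that minimality of $u_0$ forces on $|\nabla\phi|$, and pushes the threshold that guarantees $\nabla\phi\equiv0$ from Gastel's value to exactly $p_0=\frac{3+\sqrt3}{2}$, thereby improving the range $[2,\tfrac{32}{15}]$ of \cite{Gastel19}.

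I expect the main obstacle to lie in the second interval: establishing the sharp Kato inequality in two dimensions with its optimal constant, and coupling it to the stability inequality tightly enough to extract the exponent $p_0$ rather than a lossy bound. This is where the value $\tfrac{3+\sqrt3}{2}$ must be produced, and it is the genuinely new analytic input of the paper. By contrast the $[p_1,3]$ part is a careful but essentially explicit computation: the only delicate steps there are verifying that the equatorial map is the worst boundary datum and evaluating the optimal filling energy, which is what generates the transcendental constant $\frac{17-24\ln(2)}{3^{3/2}2^{5/6}}$.
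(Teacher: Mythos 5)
Your overall skeleton --- reduction to $0$-homogeneous tangent maps, an explicit competitor for $p$ near $3$, and a stability--Bochner--Kato argument for $p$ near $2$ --- matches the paper, but both halves have concrete gaps. For $[p_1,3]$ the paper does not build a geodesic contraction to a point $q$ off the image of $\phi$; it extends $\phi$ linearly towards its mean value $\bar u$ (a trace inequality, \Cref{th:trace}, whose constant involves Valtorta's sharp Poincar\'e constant) and then reprojects onto $\S^3$ by averaging radial projections from interior points in the style of Federer and Fleming (\Cref{th:HL-constantestimate}). Your contraction to a fixed $q\notin\phi(\S^2)$ cannot satisfy the required uniform bound $E_p(w)\le C\int_{\S^2}|\nabla_{\S}\phi|^p$: for $\phi$ nearly constant (with value different from $q$) the geodesic distance $D$ to $q$ is bounded below, so the radial term $f'(r)^2D^2$ contributes a fixed positive amount of energy while the right-hand side is arbitrarily small, and without a uniform constant the comparison with $\frac{1}{3-p}\int_{\S^2}|\nabla_{\S}\phi|^p$ yields no contradiction. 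The assertion that the equatorial configuration is the worst boundary datum is also unsubstantiated, and the constant $\frac{17-24\ln 2}{3^{3/2}2^{5/6}}$ is not the value of a clean filling problem: it is $\bigl(C_T(3)C_{\HL}(3)\bigr)^{-1}$, the reciprocal of a product of two non-sharp constants, so your construction would at best produce a different (conceivably better) $\eps$, not this one.

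For $[2,p_0]$ the plan as stated falls short of the claimed threshold. The paper points out (\Cref{rem:mixedinsteadofkato}) that inserting the sharp two-dimensional Kato inequality $2|\nabla|\nabla u||^2\le|\nabla^2 u|^2$ together with Cauchy--Schwarz into the stability and Bochner inequalities gives the constant $\frac{p+1}{p}$ and only reaches $p\le\frac{5+\sqrt{17}}{4}\approx 2.281$. To reach $p_0=\frac{3+\sqrt 3}{2}\approx 2.366$ one must retain the terms $\la \nabla u,\nabla|\nabla u|\ra$ on both sides and prove the mixed Cauchy--Schwarz--Kato inequality of \Cref{lem:ingredients} with the constant $\frac{3}{p}$; equality cannot hold simultaneously in the Cauchy--Schwarz and Kato steps, and exploiting that is the extra input your outline omits.
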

This result improves on the previous work of Gastel and Hardt--Lin. Interestingly, it leaves a gap $(p_0,p_1)$ of length approximately $0.6$, in which regularity is still an open problem. It is our belief that that regularity holds for these values of $p$ as well, and we provide certain motivation by showing a certain ``counterexample candidate'' fails (see \Cref{le:notmini}). However, the problem of regularity in this interval may be as difficult as determining whether minimizing harmonic mappings $u\colon B^6\to\S^6$ must be regular.

In the following, we decided to break \Cref{th:main} into two separate statements: \Cref{th:regularitycloseto2} for $p \in [2,p_0]$ and \Cref{th:regularity-near-n-1} for $p \in [p_1,3]$. The first reason for this is that the two regimes require very different techniques. And the second, that in both cases the method of proof actually leads to a more general result. 

{\bf Proof outline for \Cref{th:main}.}

Regularity for $p = 3$ follows along the lines of Morrey's proof \cite{Morrey}. Given a minimizing $u \colon B^3 \to \S^3$ and considering its restriction $v \coloneqq u\big\rvert_{\pl B_r}$ to a sphere ($r \in [\frac 12,1]$), we may extend $v$ to a map $\ov{v} \colon B_{r} \to \S^3$ with $p$-energy controlled by that of $v$. For a generic slice, the latter is controlled by the $p$-energy of $u$ on $B_{1} \setminus B_{1/2}$ and hence minimality gives 
\[
\int_{B_{1/2}} |\nabla u|^p \dx
\le \int_{B_{r}} |\nabla u|^p \dx
\le \int_{B_{r}} |\nabla \ov{v}|^p \dx
\ale \int_{B_{1} \setminus B_{1/2}} |\nabla u|^p \dx.
\]
The hole-filling trick now leads to polynomial decay of $E_p(u)$ on smaller and smaller balls. In other words, $\nabla u$ lies in a Morrey space $L^{p,\lambda}$ with some small $\lambda > 0$, which implies continuity of $u$. 

The crucial part of this argument is the ability to extend $v$ with energy control, and Morrey's construction of this extension does not apply for $p < 3$. However, there is another construction available in the literature: Hardt-Lin's extension theorem \cite[Theorem 6.2]{HLp}. This more refined topology-sensitive tool applies with a uniform constant for all $p \in [2,3]$, thus enabling us to proceed with the argument. In result, we obtain $\nabla u \in L^{p,\lambda}$ for each such $p$ \emph{with uniform $\lambda > 0$}. If only $p \in (3-\lambda,3]$, such regularity is enough to conclude the continuity of $u$. The argument we give in \Cref{s:regularitynear3} is slightly different: we decided to exploit the notion of tangent maps, which simplifies the exposition and lets us keep track of constants more easily. 

The method we employ in the regime $p \in [2,\frac{3+\sqrt{3}}{2}]$ dates back to Schoen and Uhlenbeck \cite{SU3}. For simplicity, we will only discuss the classical example $p = 2$ here and mention other authors' contributions when necessary. The proof follows by excluding the existence of non-constant tangent maps, i.e., minimizers with symmetry $u(\lambda x) = u(x)$ for $\lambda > 0$. Such maps are determined by their restriction to $\S^2$, and Schoen and Uhlenbeck derived the following estimates: 
\[
\frac 13 \int_{\S^2} |\nabla u|^4 - \frac 14 \int_{\S^2} |\nabla u|^2  
\le \int_{\S^2} |\nabla |\nabla u||^2, \quad 
\int_{\S^2} |\nabla^2 u|^2
\le \frac 12 \int_{\S^2} |\nabla u|^4  - \int_{\S^2} |\nabla u|^2. 
\]
One can combine these two by the simple pointwise inequality $|\nabla |\nabla u||^2 \le |\nabla^2 u|^2$, leading to an inequality of the form 
\[
A \int_{\S^2} |\nabla u|^4 + B \int_{\S^2} |\nabla u|^2 \le 0.
\]
If only $A,B > 0$, this would prove that $u$ is necessarily constant, and indeed an analogous argument works for maps into $\S^4,\S^5,\S^6,\ldots$. However, here we have $A = - \frac 16 < 0$ and additional insight is needed: Okayasu \cite{Okayasu94} noted that for harmonic maps, the pointwise inequality is improved to $|\nabla |\nabla u||^2 \le \frac 12 |\nabla^2 u|^2$ --- what we call a Kato inequality. That way, one obtains $A = \frac{1}{12}$ and $B = \frac{1}{4}$, which completes the proof (note that Schoen and Uhlenbeck covered the case of maps $B^3 \to \S^3$ too, but with entirely different methods). 

Gastel \cite{Gastel19} followed the same line of reasoning for $p > 2$, and to this end he proved a non-optimal Kato inequality $|\nabla |\nabla u||^2 \le \kappa(p) |\nabla^2 u|^2$, with $\kappa(p) \xrightarrow{p \to 2} \frac 12$. That way, regularity is still obtained for $p$ close to $2$ (one can check numerically that the argument covers $p \le \frac{32}{15} \approx 2.133$). Our proof differs from Gastel's in two ways. First, in the context of maps $\S^2 \to \S^3$ we obtain a Kato inequality with the \emph{optimal} constant, which by itself would enlarge the regularity interval to $p \le \frac{5+\sqrt{17}}{4} \approx 2.281$. Second, we note a possible --- quite technical in nature --- improvement in one of the other inequalities, which enables us to cover all $p \le \frac{3+\sqrt{3}}{2} \approx 2.366$. 

{\bf The paper is organized as follows.} In \Cref{s:preliminaries} we review the notions of $p$-harmonic maps that we will be using. As a motivation to study the regularity problem, in \Cref{s:motivation} we prove that the natural candidate for a singular $p$-harmonic map --- the equatorial map --- is not minimizing $p$-harmonic when $p \in [2,3)$. In \Cref{s:regularitynear3} we show how the extension theorem of Hardt and Lin \cite{HLp} can be used to obtain regularity of maps into $\cN = \S^3$ for exponents $p$ close to $3$. Next, in \Cref{s:regularitynear2} we modify Gastel's argument \cite{Gastel19}, which is itself based on the original idea of Schoen and Uhlenbeck \cite{SU3}, and establish regularity for $p \in [2,\frac{3+\sqrt{3}}{2}]$. Finally, in \Cref{s:Kato} we discuss sharp Kato inequality for $p$-harmonic maps in $2$-dimensional domains, as well as its relation to the regularity problem, and comment on the inequality in higher dimensions. We close the paper with \Cref{s:constants}, in which explicit constant estimates are given for the inequalities used in \Cref{s:regularitynear3}.

{\bf Acknowledgments.}
\begin{itemize}
 \item The project is co-financed by the Polish National Agency for Academic Exchange within Polish Returns Programme - BPN/PPO/2021/1/00019/U/00001 (KM).
 \item The project is co-financed by National Science Centre grant 2022/01/1/ST1/00021 (KM).
 \item MM was supported by the NCN Sonatina grant no. 2020/36/C/ST1/00050.
\end{itemize}

\section{Preliminaries}\label{s:preliminaries}
\subsection{Weakly, stationary, and stable $p$-harmonic maps}
In the world of non-smooth solutions, it is important to distinguish between different classes of $p$-harmonic maps. We will only consider minimizing and stable stationary $p$-harmonic maps in the sequel, but we also define weakly $p$-harmonic maps for a complete picture. To keep it simple, we restrict here to maps from an open domain $\Omega \subset \R^n$ into $\S^k \subset \R^{k+1}$.

\begin{definition}
\label{def:weaklypharmonic}
\label{def:stationary}
\label{def:stable}
\label{def:minimizing}
A (weakly) $p$-harmonic map $u \in W^{1,p}(\Omega,\S^k)$ is defined as a critical point of the $p$-energy with respect to variations in the range, i.e.,
\begin{equation}\label{eq:p-harmonic}
 \frac{d}{dt}\bigg\rvert_{t=0} E_p(u_t) = 0 
 \quad \text{where} \quad u_t = \frac{u+t\vp}{|u+t\vp|}
\end{equation}
for all $\vp \in C_c^\infty(\Omega,\R^{k+1})$. Equivalently, as a solution the Euler--Lagrance equation 
\[
-\dv(|\nabla u|^{p-2} \nabla u) = |\nabla u|^p u \quad \text{weakly}.
\]
Moreover, it is called: 
\begin{itemize}

\item \emph{stationary} if it is also a critical point with respect to variations of the domain, i.e.,
\begin{equation}
 \frac{d}{dt}\bigg\rvert_{t=0} E_p(u_t) = 0 
 \quad \text{where} \quad u_t(x) = u(x+t\xi(x))
\end{equation}
for all $\xi \in C^\infty_c(\Omega,\R^n)$.

\item \emph{stable} if it the energy has a non-negative second variation at $u$: 
\begin{equation}\label{eq:stability-def}
 \frac{d^2}{dt^2}\bigg\rvert_{t=0} E_p(u_t) \ge 0 
 \quad \text{where} \quad u_t = \frac{u+t\vp}{|u+t\vp|},
\end{equation}
or equivalently
\begin{equation}\label{eq:p-stability}
 \int_{\Omega}|\nabla u|^{p-2}\brac{|\nabla \varphi|^2-|\nabla u|^2|\varphi|^2} \dx + (p-2)\int_{\Omega} |\nabla u|^{p-4}\brac{\nabla u \cdot \nabla \varphi}^2 \dx \ge 0
\end{equation}
for all $\varphi \in C_c^\infty(\Omega, \R^{k+1})$ satisfying $\vp(x) \cdot u(x)=0$ a.e.

\item \emph{minimizing} if $E_p(u) \le E_p(v)$ for all $v \in W^{1,p}(\Omega,\S^k)$ with the same trace on $\pl \Omega$. 

\end{itemize}
\end{definition}

We have the following inclusions between the different classes of $p$-harmonic maps:
\[
 \text{minimizing}
 \subset \text{stable stationary} 
 \subset \text{stationary} 
 \subset \text{weak}.
\]
We note here that in the case $k > 2p-1$ stable $p$-harmonic maps are quasiminimizers \cite[Proposition 2]{Hong}.

\subsection{Tangent maps}
\begin{definition}
A $p$-harmonic map $u \colon \R^n \to \cN$ is called \emph{a tangent map} if it is $0$-homogeneous, meaning that $u(\lambda x) = u(x)$ for all $\lambda > 0$ and $x \in \R^n$. Moreover, we say it is a $p$-minimizing tangent map if it minimizes the $E_p$-energy \eqref{def:p-energy} on compact subsets of $\R^n$. 
\end{definition}

Most of the regularity theory of minimizing harmonic maps relies on the study of minimizing tangent maps. The following theorem of Hardt and Lin (which is a generalization of \cite[Theorem IV]{SU1}) allows us to deduce regularity by ruling out the existence of non-constant tangent maps.
\begin{theorem}[{\cite[Theorem 4.5]{HLp}}]\label{th:HL-tangentmaps}
Assume that the only $p$-minimizing tangent maps from $\R^n$ into $\cN$ are constant maps. Then all minimizing $p$-harmonic maps $u \colon \Omega \to \mathcal N$ on $n$-dimensional domains are regular. 
\end{theorem}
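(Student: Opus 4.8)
The plan is to prove the contrapositive: assuming $u \colon \Omega \to \cN$ is a minimizing $p$-harmonic map with an interior singular point $a \in \Omega$, I would construct a non-constant $p$-minimizing tangent map, contradicting the hypothesis. The argument rests on two standard pillars of the regularity theory, both established for minimizing $p$-harmonic maps by Hardt and Lin \cite{HLp}. The first is the \emph{monotonicity formula}: for a minimizer the scale-invariant energy
\[
 \Theta_u(a,r) \coloneqq r^{p-n} \int_{B_r(a)} |\nabla u|^p \dx
\]
is non-decreasing in $r$, so that the density $\Theta_u(a) \coloneqq \lim_{r \to 0^+}\Theta_u(a,r)$ is well defined. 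The second is the \emph{$\eps$-regularity theorem}: there is a constant $\eps_0 > 0$, depending only on $n$, $p$, and $\cN$, such that if $\Theta_u(a,r) < \eps_0$ for some $r$, then $u$ is regular in a neighborhood of $a$. In particular, every singular point $a$ must satisfy $\Theta_u(a) \ge \eps_0 > 0$.

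Next I would carry out the blow-up analysis at the singular point $a$, which we may take to be the origin. Setting $u_r(x) \coloneqq u(rx)$, each $u_r$ is again minimizing, and a change of variables combined with the monotonicity formula gives $\int_{B_R} |\nabla u_r|^p \dx = \Theta_u(0,rR)\,R^{n-p}$, which is bounded uniformly in $r$ on every fixed ball $B_R$. The key step is to apply the compactness theorem for minimizers to extract a sequence $r_i \to 0$ along which $u_{r_i}$ converges \emph{strongly} in $W^{1,p}_{\loc}(\R^n)$ to a limit $\vp \colon \R^n \to \cN$, which is itself a minimizing $p$-harmonic map. Strong convergence is exactly what lets the energy pass to the limit, yielding $\Theta_\vp(0,R) = \lim_i \Theta_{u_{r_i}}(0,R) = \lim_i \Theta_u(0,r_i R) = \Theta_u(0)$ for every $R > 0$. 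Since this density is independent of $R$, the rigidity case of the monotonicity formula forces $\vp$ to be $0$-homogeneous, hence a genuine $p$-minimizing tangent map.

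Finally I would combine the two computations to reach the contradiction. Because $a$ is singular, $\eps$-regularity gives $\Theta_\vp(0,R) = \Theta_u(0) \ge \eps_0 > 0$ for all $R$, whereas a constant map has vanishing normalized energy; therefore $\vp$ is non-constant. This contradicts the assumption that the only $p$-minimizing tangent maps into $\cN$ are constant, so $u$ can have no interior singular point and is regular throughout $\Omega$. The main obstacle is the compactness step: upgrading the easy weak $W^{1,p}_{\loc}$ limit to \emph{strong} convergence, which is indispensable both for the homogeneity of $\vp$ and for excluding a loss of density in the limit. This is precisely where minimality (rather than mere stationarity) enters, and in the $p$-harmonic setting it is the technically delicate heart of \cite{HLp}, typically handled by a comparison/extension argument of Luckhaus type showing that no energy concentrates on the limiting spheres.
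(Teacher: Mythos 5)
Your argument is correct and is essentially the standard blow-up proof of this result; the paper does not reprove the theorem but simply cites \cite[Theorem 4.5]{HLp}, whose proof proceeds exactly as you outline (monotonicity formula, $\eps$-regularity, strong $W^{1,p}_{\loc}$ compactness of minimizers, homogeneity of the blow-up limit via the rigidity case of monotonicity, and the density lower bound at singular points). The only quibble is attributing the strong-convergence step to a ``Luckhaus-type'' lemma: Hardt and Lin's paper predates Luckhaus and handles energy non-concentration with their own comparison/extension construction, but this is a historical footnote rather than a mathematical gap.
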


\begin{remark}\label{rem:HL-tangentmapsforstablestationary}
Using the compactness lemma for stable-stationary $p$-harmonic maps \cite[Lemma 4.3]{Hong-Wang99} 
the latter theorem may be improved to the following:

Assume $p<3$. If every stable-stationary $p$-harmonic tangent map from $\R^3$ to $\S^3$ is constant then every stable-stationary $p$-harmonic map $u \colon B^3 \to \S^3$ is regular. 
\end{remark}

\section{Motivation: the equatorial map}\label{s:motivation}
In this section we will study the equatorial map $v\colon B^n \to \S^n$ defined by
\[
 v(x) \coloneqq \brac{\frac{x}{|x|},0}.
\]
It is easy to verify that $v$ is weakly $p$-harmonic for any $p < n$. For $p \in (n-2\sqrt{n-1},n)$, we will show that it is not a minimizing $p$-harmonic map because it is not stable. As mentioned in the introduction, the map $v$ above is an important counterexample to regularity in many cases. Hence, our result motivates an attempt to prove that minimizing $p$-harmonic maps $B^n \to \S^n$ in general are regular for $p$ close to $n$. 

\begin{lemma}\label{le:notmini}
 The equatorial map $v$ is unstable for $p \in (n-2\sqrt{n-1},n)$.
\end{lemma}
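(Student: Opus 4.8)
The plan is to show instability by exhibiting an explicit test function $\varphi$ that makes the second variation \eqref{eq:p-stability} strictly negative. The natural choice, guided by the structure of the equatorial map $v(x) = (x/|x|, 0)$, is a variation that pushes the image off the equator, i.e., $\varphi(x) = \eta(|x|)\, e_{n+1}$, where $e_{n+1} = (0,\dots,0,1) \in \R^{n+1}$ is the unit vector in the direction that $v$ omits, and $\eta \colon (0,\infty) \to \R$ is a radial profile to be optimized. This $\varphi$ is automatically tangent to $\S^n$ along $v$, since $v \cdot e_{n+1} = 0$, so the admissibility constraint $\varphi(x) \cdot v(x) = 0$ holds. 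First I would record the relevant quantities for $v$: one has $|\nabla v|^2 = (n-1)/|x|^2$, and since $\varphi$ points in the $e_{n+1}$-direction while $\nabla v$ lives in the first $n$ coordinates, the cross term $\nabla v \cdot \nabla \varphi$ vanishes pointwise. This is the key simplification: the $(p-2)$-term in \eqref{eq:p-stability} drops out entirely, so the sign of the second variation is governed purely by the first integral.

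With these reductions, the stability inequality \eqref{eq:p-stability} reduces to requiring
\[
\int_{B^n} |\nabla v|^{p-2}\brac{|\nabla \varphi|^2 - |\nabla v|^2 |\varphi|^2} \dx \ge 0
\]
for all admissible $\varphi$, and I would compute this for the radial ansatz. Writing $|\nabla v|^{p-2} = ((n-1)/|x|^2)^{(p-2)/2}$, $|\varphi|^2 = \eta(|x|)^2$, and $|\nabla \varphi|^2 = \eta'(|x|)^2$, and integrating in polar coordinates (the angular integral contributing a constant factor $|\S^{n-1}|$), the quadratic form becomes a one-dimensional weighted functional in $\eta$ of Hardy type, of the shape
\[
\int_0^1 r^{a}\brac{\eta'(r)^2 - \frac{c}{r^2}\,\eta(r)^2}\, r^{n-1} \dr,
\]
where the exponent $a = -(p-2)$ comes from the weight $|\nabla v|^{p-2}$ and the constant $c = n-1$ comes from the factor $|\nabla v|^2$ multiplying $|\varphi|^2$. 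Testing with a power profile $\eta(r) = r^{\alpha}$ (suitably cut off near $0$ and $1$ to keep $\varphi$ compactly supported and in $W^{1,p}$), the integrand is comparable to $r^{\,2\alpha - 2 + n - 1 - (p-2)}(\alpha^2 - c)$, and the functional is negative precisely when $\alpha^2 < n-1$ while the overall integral still converges. The convergence condition fixes the admissible window for $\alpha$, and optimizing $\alpha$ within that window yields strict negativity exactly in the stated range $p \in (n - 2\sqrt{n-1}, n)$; the endpoint $p = n - 2\sqrt{n-1}$ is where the best power $\alpha$ saturates the Hardy constant associated to the weight, so the sharp threshold emerges from the balance between the Hardy constant $n-1$ and the weight exponent.

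The main obstacle I anticipate is twofold. First, the integrals are singular at the origin (and $v$ itself is singular there), so I must be careful that the chosen $\varphi$ is a genuinely admissible test function: it must lie in $W^{1,p}$ and ideally have compact support away from $\partial B^n$, which forces cutting off the pure power $r^\alpha$ and checking that the cutoff corrections do not destroy the strict negativity. This is the standard but delicate part of Hardy-type instability arguments, and I would handle it by taking a fixed interior power profile on, say, $B_{1/2}$, extending it to vanish on $\partial B^n$, and verifying that the dominant negative contribution comes from a region where the cutoff is inactive. Second, pinning down the \emph{exact} threshold $n - 2\sqrt{n-1}$ rather than merely some subrange requires solving the associated one-dimensional eigenvalue/Euler--Lagrange problem sharply — i.e., identifying the optimal exponent $\alpha$ for the weighted Hardy inequality with weight $r^{-(p-2)}$ on $\R^n$. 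The optimal $\alpha$ is the root of a quadratic coming from the Euler--Lagrange equation of the weighted functional, and the discriminant of that quadratic is what produces the $\sqrt{n-1}$ and thereby the stated critical value of $p$. Everything else is a routine, if slightly tedious, computation in polar coordinates.
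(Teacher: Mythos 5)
Your reduction is exactly the paper's: the same test field $\varphi = \eta(|x|)e_{n+1}$, the same cancellation of the $(p-2)$-term, and the same one-dimensional form $\int_0^1 r^{n-p+1}\bigl(\eta'^2 - \tfrac{n-1}{r^2}\eta^2\bigr)\dif r$ as in \eqref{eq:stabilityforueta}. The gap is in the step where you make this form negative. Your stated criterion --- pick a power $\eta(r)=r^\alpha$ with $\alpha^2<n-1$ such that the integral converges --- cannot be the right mechanism: for \emph{every} $p<n$ one can take, say, $\alpha$ slightly positive, so that $\alpha^2<n-1$ and the bulk integrand $r^{n-p-1+2\alpha}\bigl(\alpha^2-(n-1)\bigr)$ is negative and integrable near the origin. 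If pointwise negativity of the bulk plus convergence sufficed, the equatorial map would be unstable for all $p<n$, contradicting J\"ager--Kaul (\Cref{thm:JK-stability}), by which it is stable --- indeed minimizing --- for $p=2$ and $n\ge 7$. The point is that for such $\alpha$ the negative bulk contribution is a \emph{finite} number, while the positive contribution forced by cutting off near $\partial B^n$ (and, if needed, near the origin) is also of order one; nothing decides the sign, and for $p\le n-2\sqrt{n-1}$ the sharp weighted Hardy inequality guarantees the positive terms always win. So the ``delicate part'' you defer is in fact the entire content of the lemma.

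There are two honest ways to close the gap, and your second paragraph almost names the first. The Euler--Lagrange/indicial equation of the one-dimensional form is the quadratic $\lambda^2+(n-p)\lambda+(n-1)=0$; in the claimed instability range its discriminant $(n-p)^2-4(n-1)$ is \emph{negative}, so the ``optimal power'' is complex and no real pure power $r^\alpha$ directly witnesses instability. The paper instead takes the oscillating profile $\eta(r)=r^{(p-n)/2}\sin(\mu\ln r)$, a genuine solution of the slightly perturbed ODE \eqref{eq:etachoice}, which vanishes at $r=1$ and at a sequence of radii $r_0$; integration by parts then yields the strictly negative value $-\eps\int_{r_0}^1 r^{n-p-1}\eta^2\dif r$. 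Alternatively one can salvage the power ansatz by sending $\alpha\downarrow\frac{p-n}{2}$ (the Hardy exponent): the integral $\int_\delta^{1/2}r^{n-p-1+2\alpha}\dif r$ blows up while the cutoff corrections stay bounded, and the blow-up carries the negative sign exactly when $\bigl(\tfrac{n-p}{2}\bigr)^2<n-1$, i.e.\ $p>n-2\sqrt{n-1}$. Either route works, but one of them must actually be run; as written, the proposal does not distinguish the unstable range from the stable one.
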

In particular, $v$ is unstable for every $p \in [2,3]$ when $n \le 6$. Let us compare this result with minimality of the ``hedgehog'' map $\frac{x}{|x|}\colon B^n \to \S^{n-1}$, for which we have: 
\begin{theorem}\label{th:hedgehogmin}
 The map $\frac{x}{|x|}\colon B^n \to \S^{n-1}$ is a minimizing $p$-harmonic map for every $1 \le p < n$.
\end{theorem}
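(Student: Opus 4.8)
The plan is to establish minimality by a direct energy comparison. First I would record the energy of $u$: since $|\nabla u|^2=(n-1)/|x|^2$,
\[
E_p(u)=(n-1)^{p/2}\,\omega_{n-1}\int_0^1 r^{\,n-1-p}\,\dif r=(n-1)^{p/2}\,\frac{\omega_{n-1}}{n-p},
\]
which is finite exactly when $p<n$ (here $\omega_{n-1}=|\S^{n-1}|$); this is the computation that pins down the stated range. By the definition of minimality it then remains to prove $E_p(v)\ge E_p(u)$ for every competitor $v\in W^{1,p}(B^n,\S^{n-1})$ with $v|_{\partial B^n}=\mathrm{id}$. I would deliberately avoid a convexity or null-Lagrangian comparison: the tangent-plane inequality for $|\cdot|^p$ integrates, after using the Euler--Lagrange equation $-\dv(|\nabla u|^{p-2}\nabla u)=|\nabla u|^p\,u$ and the matching boundary data, to the cross term $p\int_{B^n}|\nabla u|^p(u\cdot v-1)\,\dx\le 0$, which has the wrong sign; and because $|\nabla u|$ is non-constant, no fixed-degree pointwise calibration can be sharp for $u$.

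Instead I would slice in polar coordinates $x=r\theta$ and use the topology of the target on each sphere. Writing $v_r:=v|_{\partial B_r}\colon\partial B_r\to\S^{n-1}$ and splitting $|\nabla v|^2=|\partial_r v|^2+|\nabla^{T}v|^2$ into radial and tangential parts, discarding the nonnegative radial part gives
\[
E_p(v)\ge\int_0^1\!\!\int_{\partial B_r}|\nabla^{T}v|^{p}\,\dif\sigma\,\dif r.
\]
For $p\ge n-1$ each $v_r$ lies in $W^{1,p}(\S^{n-1},\S^{n-1})$ for a.e.\ $r$ with a well-defined degree, constant in $r$ by homotopy invariance and hence equal to $\deg(v_1)=1$. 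Pulling back the volume form $\omega$ of the target and using the Hadamard/AM--GM bound $|v_r^*\omega|\le\big(|\nabla^{T}v|^2/(n-1)\big)^{(n-1)/2}$ yields the conformally invariant slice estimate
\[
\int_{\partial B_r}|\nabla^{T}v|^{n-1}\,\dif\sigma\ge (n-1)^{(n-1)/2}\Big|\int_{\partial B_r}v_r^*\omega\Big|=(n-1)^{(n-1)/2}\,\omega_{n-1}.
\]
Since $p\ge n-1$, Jensen's inequality on $\partial B_r$ (of total mass $r^{n-1}\omega_{n-1}$) upgrades this to $\int_{\partial B_r}|\nabla^{T}v|^{p}\,\dif\sigma\ge (n-1)^{p/2}\,\omega_{n-1}\,r^{\,n-1-p}$, and integrating the weight $r^{\,n-1-p}$ over $(0,1)$ returns precisely $E_p(u)$. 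Every inequality is an equality for $u$ — whose radial derivative vanishes and whose slices are the identity — so this proves minimality in the range $n-1\le p<n$.

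The step I expect to be the genuine obstacle is the exponent restriction $p\ge n-1$ (which at $n=3$, $p=2$ is exactly the borderline case of Brezis--Coron--Lieb \cite{BCL}). For $1\le p<n-1$ both ingredients above break down: the slice degree need not be defined, and Jensen reverses — reflecting the fact that degree-one self-maps of $\S^{n-1}$ have $p$-energy infimum zero for such $p$ by concentration (bubbling). Consequently minimality cannot be separated sphere-by-sphere and must instead come from the coupling between radial and tangential energy, which penalizes concentration over a radial interval; carrying this out is the delicate analysis of Coron--Gulliver. For the uses in the present paper — where $n=3$ and $p\in[2,3)$, so that $p\ge n-1$ — the slicing argument above already suffices.
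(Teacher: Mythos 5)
The paper itself offers no proof of \Cref{th:hedgehogmin}: it is a quoted result, attributed to J\"{a}ger--Kaul, Brezis--Coron--Lieb, Coron--Gulliver, Lin, Hardt--Lin--Wang and Hong, so your attempt has to be judged on its own terms. The energy computation, the Hadamard/AM--GM bound $|v_r^*\omega|\le\bigl((n-1)^{-1}|\nabla^T v|^2\bigr)^{(n-1)/2}$, and the Jensen upgrade from exponent $n-1$ to $p$ are all correct, and every inequality is indeed an equality for $x/|x|$. The fatal step is the claim that $\deg(v_r)$ is ``constant in $r$ by homotopy invariance and hence equal to $\deg(v_1)=1$.'' For competitors in $W^{1,p}(B^n,\S^{n-1})$ with $n-1\le p<n$ this is false: constancy of the slice degree would require the distributional Jacobian $d(v^*\omega)$ to vanish on annuli, and sphere-valued maps of this regularity can have nonvanishing distributional Jacobian --- the map $x/|x|$ itself, with $d(v^*\omega)=\omega_{n-1}\,\delta_0$, is the basic example, and a translate of it places such a defect inside any annulus. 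Concretely, inserting a small dipole (a pair of point singularities of degrees $+1$ and $-1$, of arbitrarily small additional $p$-energy since $p<n$) into $x/|x|$ near radius $\tfrac12$, with the $-1$ defect closer to the origin, yields an admissible competitor with $\deg(v_r)=0$ for all $r$ in an interval; for such $v$ your slicing argument only yields $E_p(v)\ge(n-1)^{p/2}\omega_{n-1}\int_0^1 r^{n-1-p}|\deg(v_r)|^{p/(n-1)}\,\dif r$, which is strictly less than $E_p(u)$. So the argument does not close even in the range $p\in[n-1,n)$ where you claim it suffices.

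This is not a technical oversight but the entire content of the theorem: the difficulty is precisely to control competitors whose degree ``escapes'' radially through finite-energy point defects, and handling this requires genuinely new input --- the $D$-field and minimal-connection analysis of Brezis--Coron--Lieb for $p=2$, $n=3$, the averaged comparison of Coron--Gulliver, and the arguments of Hardt--Lin--Wang and Hong for general $p$. The obstacle you single out (loss of a well-defined slice degree and tangential bubbling for $p<n-1$) is real, but it is a second, additional obstruction; the radial escape of degree is already present for $n=3$, $p=2$, exactly the case you assert your argument covers. A correct proof must couple the radial and tangential energies (or work with the divergence structure of the pulled-back volume form on all of $B^n$), rather than bounding each slice independently.
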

\Cref{th:hedgehogmin} is due to J\"{a}ger--Kaul \cite{JK} in the case $p=2$, $n\ge 7$; Brezis--Coron--Lieb \cite{BCL} for $n=3$, $p=2$; Coron--Gulliver \cite{CG} when $p=1,2,\ldots,n-1$; Lin \cite{Lin-remark} for $n\ge 3$, $p=2$; Hardt--Lin--Wang \cite{HLW} in the case $p\in[n-1,n)$; Hong \cite{Hong-minimality} when $1<p\le n-1$.

Let us also recall that for $p=2$ we have
\begin{theorem}[{\cite[Theorem 2, \textsection 3]{JK}}]\label{thm:JK-stability}
The equatorial map $\brac{\frac{x}{|x|},0}\colon B^n \to \S^n$ is minimizing harmonic if and only if $n\ge 7$. 
\end{theorem}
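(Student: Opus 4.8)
The plan is to prove the two implications by completely different means: the \emph{only if} direction (if $v$ is minimizing then $n\ge 7$) is a soft consequence of the second‑variation computation already recorded in this paper, whereas the \emph{if} direction ($n\ge 7\Rightarrow$ minimizing) requires a genuine energy comparison against an arbitrary competitor. Throughout, $\S^{n-1}\subset\S^n$ denotes the equator $\{x_{n+1}=0\}$, so that $v=\brac{x/\abs{x},0}$ lands in $\S^{n-1}$ and $E(v)=(n-1)\int_{B^n}\abs{x}^{-2}\dx<\infty$ precisely for $n\ge 3$.

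For the \emph{only if} direction I would argue by contraposition: if $3\le n\le 6$ then $v$ fails to be stable, hence, being in the chain minimizing $\subset$ stable, cannot be minimizing. This is exactly \Cref{le:notmini} specialized to $p=2$, since $2\in\brac{n-2\sqrt{n-1},n}$ is equivalent to $n^2-8n+8<0$, i.e. $4-2\sqrt2<n<4+2\sqrt2\approx 6.83$, which captures the integers $n=3,4,5,6$. The borderline low case $n=2$ is vacuous, as $\abs{\nabla v}^2=(n-1)\abs{x}^{-2}$ is then not integrable on $B^2$, so $v\notin W^{1,2}$ and is not a finite‑energy map at all.

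For the \emph{if} direction I would fix any $w\colon B^n\to\S^n$ with $w=v$ on $\pl B^n$ and use the coordinate split $w=\brac{\sqrt{1-a^2}\,\xi,\,a}$, where $a=w_{n+1}\colon B^n\to[-1,1]$ and $\xi\colon B^n\to\S^{n-1}$ inherit the boundary data $a=0$, $\xi=x/\abs{x}$ (the pole set $\{\abs{a}=1\}$, where $\xi$ is ill‑defined, is handled by the symmetric reduction below). A direct computation gives the energy density
\[
 \abs{\nabla w}^2 = (1-a^2)\,\abs{\nabla\xi}^2 + \frac{\abs{\nabla a}^2}{1-a^2},
\]
and, splitting the constant $E(v)=(n-1)\int_{B^n}\abs{x}^{-2}$ according to $1=(1-a^2)+a^2$, one obtains the exact identity
\[
 E(w)-E(v)=\brac{\int_{B^n}(1-a^2)\abs{\nabla\xi}^2-(n-1)\!\int_{B^n}\frac{1-a^2}{\abs{x}^2}}+\brac{\int_{B^n}\frac{\abs{\nabla a}^2}{1-a^2}-(n-1)\!\int_{B^n}\frac{a^2}{\abs{x}^2}}.
\]
The second bracket is non‑negative whenever $n\ge 7$: bounding $\tfrac{\abs{\nabla a}^2}{1-a^2}\ge\abs{\nabla a}^2$ and extending $a$ by zero (it vanishes on $\pl B^n$), the sharp Hardy inequality gives $\int\abs{\nabla a}^2\ge\tfrac{(n-2)^2}{4}\int\abs{x}^{-2}a^2$, and $\tfrac{(n-2)^2}{4}\ge n-1\iff(n-2)^2\ge 4(n-1)\iff n\ge 4+2\sqrt2\iff n\ge 7$. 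The first bracket is a \emph{weighted} version of hedgehog minimality: for the constant weight $1-a^2\equiv 1$ (i.e. competitors $a\equiv 0$ into the equator) it is exactly non‑negative by \Cref{th:hedgehogmin} with $p=2<n$.

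The main obstacle is the first bracket for a genuinely varying weight $1-a^2$. It is \emph{not} non‑negative term‑by‑term — one cannot expect $x/\abs{x}$ to minimize $\int \rho\,\abs{\nabla\xi}^2$ for every weight $\rho=1-a^2\in[0,1]$, since where $\rho$ is small the map $\xi$ is cheaply allowed to deviate from the hedgehog. The resolution is that exactly in such regions ($\abs{a}$ close to $1$) the discarded positivity $\tfrac{\abs{\nabla a}^2}{1-a^2}-\abs{\nabla a}^2$ is large, so the two brackets must be balanced \emph{jointly} rather than separately; this joint balancing is where the sharpness of the threshold resides. I would carry it out in the spirit of Jäger–Kaul by reducing to $O(n)$‑equivariant competitors, for which $\xi=x/\abs{x}$ is forced and $a=a(\abs{x})$, collapsing the problem to a one‑dimensional energy whose minimizer over profiles is $a\equiv 0$ precisely when the radial Hardy constant dominates, i.e. for $n\ge 7$. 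The remaining (and genuinely delicate) point is to justify that this symmetric reduction loses no energy; once granted, the sharp Hardy constant $\tfrac{(n-2)^2}{4}$ pins the threshold at $n=7$, in exact agreement with the instability threshold of the \emph{only if} part, which confirms that the result is sharp.
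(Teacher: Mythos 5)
First, a point of reference: the paper does not prove this statement at all --- it is quoted verbatim from J\"{a}ger--Kaul \cite{JK}, and the only related argument in the paper is \Cref{le:notmini}, which is precisely the instability computation you invoke. So there is no in-paper proof to compare against; your proposal must stand on its own. Your \emph{only if} direction does: specializing \Cref{le:notmini} to $p=2$ gives instability exactly when $n-2\sqrt{n-1}<2<n$, i.e.\ for $n=3,4,5,6$, and instability excludes minimality by the chain of inclusions in \Cref{s:preliminaries}; the case $n=2$ is indeed vacuous since $|\nabla v|^2=|x|^{-2}$ is not integrable on $B^2$. This half is correct.

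The \emph{if} direction, however, has a genuine gap, and it sits exactly where you flag it. The decomposition $|\nabla w|^2=(1-a^2)|\nabla\xi|^2+\frac{|\nabla a|^2}{1-a^2}$, the resulting identity for $E(w)-E(v)$, and the Hardy-inequality treatment of the second bracket are all correct, and $(n-2)^2/4\ge n-1\iff n\ge 7$ is the right threshold. But the step you propose to close the argument --- replacing an arbitrary competitor by an $O(n)$-equivariant one ``losing no energy'' --- is not a lemma one can grant: there is no symmetrization principle for sphere-valued maps. The usual route (average over the group action, use convexity of the Dirichlet energy) fails because the average of sphere-valued maps leaves the sphere, and projecting back can increase energy; in fact the assertion that the infimum over all competitors equals the infimum over equivariant ones is essentially equivalent to the theorem being proved. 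A sanity check that no soft reduction of this kind exists: if it did, \Cref{th:hedgehogmin} would also be immediate, whereas it required substantial separate work (Brezis--Coron--Lieb, Coron--Gulliver, Lin, \ldots). J\"{a}ger--Kaul's actual argument estimates the energy of \emph{general} competitors directly, and the coupling between your two brackets --- keeping the positivity $\frac{|\nabla a|^2}{1-a^2}-|\nabla a|^2$ you discarded so that it can absorb the deficit of the weighted hedgehog term where $|a|$ is close to $1$ --- is precisely where the work lies. So your outline identifies the correct structure and the correct threshold, but the core of the ``if'' direction is missing rather than merely deferred.
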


\begin{proof}[Proof of \Cref{le:notmini}]
We follow the proof of \cite[Theorem 2]{JK}. 

Assume on the contrary that $v$ satisfies the stability inequality \eqref{eq:p-stability} for all $\vp \in C_c^\infty(\B^n,\R^{n+1})$ satisfying $v(x) \cdot \vp(x) = 0$ a.e. Choosing 
\[
 \vp(x) \coloneqq \eta(|x|)e_{n+1},
\]
where $\eta \colon[0,1]\to \R$, we easily have $v(x) \cdot \vp(x) = 0$. Additionally we have $\nabla v \cdot \nabla \varphi =0$ as $\varphi$ is radial and $v$ is constant on the radii. Since $|\nabla v(x)|^2 = \frac{n-1}{|x|^2}$, the stability inequality for our choice of $\varphi$ would give 
\begin{equation}
 \int_{B^n} |x|^{2-p}\brac{ (\eta'(|x|))^2 - \frac{n-1}{|x|^2}\eta^2(|x|)} \dif x \ge 0,
\end{equation}
which after integrating over spheres $S_r$ is equivalent to
\begin{equation}\label{eq:stabilityforueta}
 \int_0^1 r^{n-p+1}\brac{(\eta'(r))^2 - \frac{n-1}{r^2}\eta^2(r)} \dif r\ge 0.
\end{equation}
In order to obtain a contradiction, we will construct a non-zero function $\eta \colon [0,1] \to \R$ satisfying the ODE 
\begin{equation}\label{eq:etachoice}
\begin{cases}
  \eta''(r) + \frac{n-p+1}{r}\eta'(r) + \frac{n-1-\varepsilon}{r^2}\eta(r) = 0 
  & \text{ for } r \in (r_0,1), \\
  \eta(r_0) = \eta(1) = 0.
\end{cases}
\end{equation}
for some small parameters $\eps(p,n), r_0(p,n) > 0$. Assuming that we have a solution $\eta$ of \eqref{eq:etachoice} extended by zero to $[0,r_0]$, integration by parts leads to 
\begin{equation}
\begin{split}
  \int_0^1 & r^{n-p+1}\brac{(\eta'(r))^2 - \frac{n-1}{r^2}\eta^2(r)} \dif r \\
  &= - \int_{r_0}^1 r^{n-p+1}\eta(r)\brac{\eta''(r)+\frac{n-p+1}{r}\eta'(r) + \frac{n-1}{r^2}\eta(r)} \dif r\\
  &= - \eps \int_{r_0}^1 r^{n-p-1}{\varepsilon}\eta^2(r) \dif r < 0, 
\end{split}
\end{equation}
which contradicts \eqref{eq:stabilityforueta}.

It remains to prove that there is a non-zero solution to \eqref{eq:etachoice}.  Using the substitution $\zeta(t) = \eta(e^t)$ we arrive at the ODE 
\begin{equation}\label{eq:zetaequation}
 \zeta''(t) +(n-p) \zeta'(t) + (n-1-\eps) \zeta(t) = 0,
\end{equation}
whose solutions are explicit: 
\[
\zeta(t) = 
\begin{cases}
	c_1 e^{\lambda_1 t} + c_2 e^{\lambda_2 t} & \text{ if } \Delta > 0, \\
	e^{\lambda t} (c_1  + c_2 t ) & \text{ if } \Delta = 0, \\
	e^{\lambda t} (c_1 \cos(\mu t) + c_2 \sin(\mu t)) & \text{ if } \Delta < 0,
\end{cases}
\]
depending on the sign of the determinant $\Delta_\eps = (n-p)^2-4(n-1-\eps)$. Note however that the boundary conditions $\zeta(\ln r_0) = \zeta(0) = 0$ force $\zeta$ to be identically zero when $\Delta_\eps \ge 0$. 

Thus, we restrict to the range $p \in (n-2\sqrt{n-1},n+2\sqrt{n-1})$, in which we have $\Delta_0 < 0$. Fixing small $\eps(n,p) > 0$, we still have $\Delta_\eps < 0$, and thus $\zeta(t) = e^{\lambda t} \sin(\mu t)$ is a solution of \eqref{eq:zetaequation} for some suitable $\lambda, \mu$ ($\lambda$ may be small but positive). We now choose $r_0 \in (0,1)$ so that $\ln r_0$ is a negative multiple of $\frac{2\pi}{\lambda}$, in which case $\zeta$ satisfies the boundary conditions and produces a solution of \eqref{eq:etachoice}: 
\[
 \eta(r) = r^{\frac{p-n}{2}} \sin(\sqrt{-\Delta_\eps/4} \cdot \ln r).
\]
This finishes the proof.
\end{proof}

\section{Regularity for \texorpdfstring{$p$}{p} close to 3}\label{s:regularitynear3}
As a special case, the regularity theory of Hardt and Lin \cite[Corollary 2.6]{HLp} for minimizing $p$-harmonic maps shows that any minimizing $n$-harmonic map $u\in W^{1,n}(B^n,\mathcal N)$ is regular. However, this particular regularity theorem can also be proved in a way similar to how Morrey \cite{Morrey} established regularity of minimizing harmonic maps in dimension $2$, see also \cite[Chapter 2.1]{Lin-Wang-book}. Exploiting the methods of both these proofs, we obtain the main result of this section.

\begin{theorem}
\label{th:regularity-near-n-1}
 There is an $\eps(n) > 0$ such that every minimizing $p$-harmonic map $u\in W^{1,p}(B^n, \S^n)$ with $n-\eps < p \le n$ is regular. 
 
 In the case $n=3$ we have 
$\eps = \frac{17-24\ln 2}{3^{3/2}2^{5/6}}\approx 0.0394$, in particular if $p \in [2.961,3]$ then every minimizing $p$-harmonic map $u \colon B^3\to\S^3$ is regular.
\end{theorem}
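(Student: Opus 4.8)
The plan is to reduce to tangent maps and then exploit the scaling of $p$-energy under $0$-homogeneity. By \Cref{th:HL-tangentmaps} it suffices to show that, for $p$ close enough to $n$, the only $p$-minimizing tangent maps $u\colon\R^n\to\S^n$ are constant. Such a $u$ is determined by $\omega\coloneqq u|_{\S^{n-1}}$ via $u(x)=\omega(x/|x|)$, and since $u$ minimizes on compact sets, $u|_{B^n}$ is minimizing for the boundary datum $\omega$. I would also record that $\omega$ is continuous (as $p>n-1$), so its image has $\mathcal H^n$-measure zero and misses a point $q\in\S^n$; this is what makes $\omega$ null-homotopic and hence extendable into $\S^n$.

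First I would compute the energy of the minimizer itself. Passing to polar coordinates and using $|\nabla u(x)|=|x|^{-1}\,|\nabla_{\S^{n-1}}\omega(x/|x|)|$, for $p<n$ one gets
\[
\int_{B^n}|\nabla u|^p\dx=\Big(\int_0^1 r^{\,n-1-p}\dif r\Big)\int_{\S^{n-1}}|\nabla_{\S^{n-1}}\omega|^p\,\dhn=\frac{1}{n-p}\int_{\S^{n-1}}|\nabla_{\S^{n-1}}\omega|^p\,\dhn.
\]
The point is that the prefactor $\tfrac{1}{n-p}$ \emph{diverges} as $p\to n^-$: a homogeneous map is a very expensive minimizer near $p=n$, and beating it only requires a competitor whose energy stays comparable to $\int_{\S^{n-1}}|\nabla_{\S^{n-1}}\omega|^p$ with a bounded constant.

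Next I would produce such a competitor $\ov v\colon B^n\to\S^n$ with $\ov v|_{\S^{n-1}}=\omega$. Crucially it must not be the homogeneous extension (which again costs $\tfrac{1}{n-p}$), but a genuine contraction $\ov v(r\theta)=\Phi\big(r,\omega(\theta)\big)$ toward $q$, with a radial profile chosen so that the angular part of $|\nabla\ov v|^p$ integrates against $r^{n-1}\dif r$ to a constant that stays bounded as $p\to n$, while the radial part is controlled through a Poincaré-type inequality using that $\omega$ avoids $q$. This is exactly the content of the topology-sensitive extension theorem of Hardt and Lin \cite[Theorem 6.2]{HLp}, which supplies a constant $C=C(n)$, independent of $p\in[2,n]$, with
\[
\int_{B^n}|\nabla \ov v|^p\dx\le C\int_{\S^{n-1}}|\nabla_{\S^{n-1}}\omega|^p\,\dhn.
\]
Comparing with the minimizer and cancelling $\int_{\S^{n-1}}|\nabla_{\S^{n-1}}\omega|^p$ (allowed unless $\omega$ is constant) gives $\tfrac{1}{n-p}\le C$, i.e.\ $p\le n-\tfrac1C$. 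Contrapositively, for $p\in\big(n-\tfrac1C,\,n\big)$ every $p$-minimizing tangent map is constant; together with the classical regularity at $p=n$ \cite[Corollary 2.6]{HLp} and \Cref{th:HL-tangentmaps}, this yields the first assertion with $\eps\coloneqq\tfrac1C$.

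The hard part is the sharp quantitative statement for $n=3$, where $\eps=\frac{17-24\ln2}{3^{3/2}2^{5/6}}$ forces me to abandon the abstract Hardt--Lin constant in favour of an explicit, nearly optimal contraction. Concretely I would pick a particular profile $\Phi$, contract $\omega$ to $q$ through geodesics of $\S^3$, and optimize the one or two free parameters (the contraction radius and rate); evaluating the ensuing radial integrals is what produces the logarithm $\ln 2$ and the algebraic factor $3^{3/2}2^{5/6}$. The delicate bookkeeping of these constants---together with checking that the bound holds with a single constant across the whole range $p\in[2,3]$, so that the threshold $p_1=3-\eps\approx2.961$ is genuinely reached---is what I would carry out in \Cref{s:constants}.
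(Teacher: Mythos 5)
Your overall strategy coincides with the paper's: reduce to $p$-minimizing tangent maps via \Cref{th:HL-tangentmaps}, use the identity $\int_{B^n}|\nabla u|^p\dx=\frac{1}{n-p}\int_{\S^{n-1}}|\nabla_T u|^p\dhn$ for a $0$-homogeneous map, produce a competitor with the same boundary values and energy at most $C\int_{\S^{n-1}}|\nabla_T u|^p\dhn$ with $C$ bounded uniformly for $p\in[n-1,n]$, and conclude $\frac{1}{n-p}\le C$ unless $u$ is constant, so that $\eps=1/C$ works. This is exactly Steps 1--2 of the paper's proof, and like you the paper obtains the competitor from Hardt--Lin's extension theorem \cite[Theorem 6.2]{HLp}, preceded by a linear trace extension into $\R^{n+1}$.

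Where your sketch departs from the paper --- and where it has a genuine gap --- is in the description of the competitor and hence in the derivation of the explicit constant for $n=3$. You propose to contract $\omega$ along geodesics toward a point $q\in\S^n$ missed by its image. But merely knowing that $\omega$ avoids $q$ gives no quantitative control: the retraction of $\S^n\setminus\{q\}$ onto the antipode of $q$ has Lipschitz constant blowing up near $q$, and nothing bounds the distance from $q$ to the image of $\omega$ in terms of $\int_{\S^{n-1}}|\nabla\omega|^p$, so ``a Poincar\'e-type inequality using that $\omega$ avoids $q$'' cannot by itself yield a $p$-uniform energy bound. The paper's construction (\Cref{th:trace} and \Cref{th:HL-constantestimate} in the appendix) is different: one first extends $\omega$ \emph{into $\R^4$} by $v(x)=|x|^\alpha\omega(x/|x|)+(1-|x|^\alpha)\ov{u}$, where $\ov{u}$ is the Poincar\'e mean on $\S^2$ (Valtorta's sharp constant \cite{Valtorta12} enters here), and then projects into $\S^3$ by the Federer--Fleming averaged radial projection $x\mapsto P_a^{-1}\bigl(\tfrac{v(x)-a}{|v(x)-a|}\bigr)$, choosing a good center $a\in B^4_1$ by averaging against the weight $(1+|a|)^{-p}$. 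It is this averaging that produces $C_{\HL}(3)=\frac{8}{17-24\ln 2}$ (the $\ln 2$ comes from $\int_0^1 t^3(1+t)^{-3}\dif t$, not from a geodesic profile), while optimizing $\alpha$ in the trace extension gives $C_T(3)=\frac{3^{3/2}}{2^{13/6}}$; the product yields $\eps=\frac{17-24\ln 2}{3^{3/2}2^{5/6}}$. To close the quantitative part you would need to replace your geodesic-contraction heuristic by such an averaging argument (or something equivalent).
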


\begin{remark}
As can be seen by inspecting the proof, the only ingredient that depends on the target manifold is Hardt and Lin's extension theorem \cite[Theorem 6.2]{HLp}, which holds in greater generality. Hence, the same regularity theorem is true for any $n$-connected target manifold $\cN$ in place of $\S^n$, with $\eps(n,\cN) > 0$ now depending also on $\cN$. More generally:  if $n\ge4$, for any manifold $\n$ such that $\pi_1(\n)$ is finite and $\pi_2(\n)\simeq \ldots \simeq \pi_n(\n)\simeq \{0\}$, see \cite{MVS-tr}.
\end{remark}

\begin{proof}
\textsc{Step 1}: We begin with the following Claim:

For any $p \in [n-1,n]$ and $w \in W^{1,p}(\S^{n-1},\S^n)$, there is an extension $v\in W^{1,p}(B^n,\S^n)$ such that $v \big\rvert_{\pl B^n} = w \big\rvert_{\pl B^n}$ in the trace sense and 
\begin{equation}\label{eq:goodextensionintospheres}
\int_{\B^n} |\nabla v|^p \dx \le C_{\text{ext}}(p,n) \int_{\S^n} |\nabla w|^p \dif \cH^{n-1},
\end{equation}
with constant $C(p,n)$ depending on both $p$ and $n$, but bounded uniformly for $p \in [n-1,n]$. 

Indeed, we may use a variant of the trace theorem to extend $w \colon \S^{n-1}\to \S^n$ to $\bar v \in W^{1,p}(B^n,\R^{n+1})$ satisfying
\begin{equation}\label{eq:traceinequality1}
 \int_{B^n} |\nabla \bar v|^p \dx \le C_T(p,n) \int_{\S^{n-1}} |\nabla w|^p \dif \cH^{n-1}.
\end{equation}
Next, since $p \le n$ we have $\pi_{j}(\S^{n})=\{0\}$ for all $j \le \floor{p-1}$, and we may use Hardt and Lin's extension theorem \cite[Theorem 6.2]{HLp}\footnote{This Theorem has been reproved several times, see, e.g.,   \cite[Proof of 2.3. Lemma]{HardtKinderlehrerLin1986}, \cite[Lemma A.1]{HardtKinderlehrerLinstable}, \cite[Lemma 2.2]{BPVS14}, \cite[Lemma 4.7]{Hopper} , \cite[Section 5.1]{MMS}.} to construct $v\in W^{1,p}(B^n,\S^{n})$ with $v\big\rvert_{\partial B^n} = \bar v\big\rvert_{\partial B^n} = u$ in the trace sense and such that

\begin{equation}\label{eq:HLextension}
 \int_{B^n} |\nabla v|^p \dx \le C_{\text{HL}}(p,n) \int_{B^n} |\nabla \bar v|^p \dx.
\end{equation}
Combining \eqref{eq:traceinequality1} and \eqref{eq:HLextension} we obtain \eqref{eq:goodextensionintospheres} with $C_{\text{ext}}(p,n) = C_T(p,n)C_{\HL}(p,n)$.

\textsc{Step 2}: By Theorem \ref{th:HL-tangentmaps}, it is enough to restrict our attention to $0$-homogeneous maps. In other words, we will show that any $p$-minimizing tangent map $u \colon \R^n \to \S^n$ has to be constant.
\[
\int_{\B^n} |\nabla u|^p \dx 
= \int_0^1 r^{n-1-p} \dd r \int_{\S^{n-1}} |\nabla_T u|^p \dif\mathcal H^{n-1} 
= \frac{1}{n-p} \int_{\S^{n-1}} |\nabla_T u|^p \dif\mathcal H^{n-1}. 
\]
We use now \textsc{Step 1} to construct a map $v\in W^{1,p}(B^n,\S^n)$ with $v\big\rvert_{\partial B^n} = u\big\rvert_{\partial B^n}$. Combining the minimality of $u$ with \eqref{eq:goodextensionintospheres} we obtain 
\[
\frac{1}{n-p} \int_{\S^{n-1}} |\nabla_T u|^p \dif\mathcal H^{n-1}
=\int_{\B^n} |\nabla u_0|^p \dx
\le \int_{\B^n} |\nabla v|^p \dx
\le C_{\text{ext}}(p,n) \int_{\S^{n-1}} |\nabla_T u|^p \dif \mathcal H^{n-1}.
\]
If $u$ is non-constant, we necessarily have $\frac{1}{n-p} \le C_{\text{ext}}(p,n)$. But if it so happens that $C_{\text{ext}}(p,n) < \frac{1}{n-p}$, the resulting contradiction shows that minimizing $p$-harmonic maps cannot have singularities. The theorem then follows for $\eps(n) = \left( \sup_{p\in[n-1,n]} C_{\text{ext}}(p,n) \right)^{-1}$. 

\textsc{Step 3}. Explicit estimates in the case $n = 3$.

By \Cref{th:trace} in dimension 3 we have explicit estimates for the constant $C_T(p,3) = C_T(p)$, which gives $\sup_{p \in [2,3]} C_T(p) = C_T(3) = \frac{3^{3/2}}{2^{13/6}}$. By \Cref{th:HL-constantestimate} we obtain estimates for $C_{\HL}(p,3) = C_{\HL}(p)$: we have $\sup_{p \in [2,3]} C_{\HL}(p) = C_{\HL}(3) = \frac{8}{17-24\ln 2}$. Thus,
\[
 3-\frac{1}{C_{\text{ext}}(p,3)} = 3-\frac{1}{C_T(3)C_{\HL}(3)}\le 2.961.
\]
This comparison implies that whenever $p \ge 2.961$, regularity holds for all minimizing $p$-harmonic maps $u \in W^{1,p}(B^3,\S^3)$.
\end{proof}

\section{Regularity for \texorpdfstring{$p$}{p} close to 2}\label{s:regularitynear2}
In this section we show how to improve the regularity result of Gastel \cite[Section 6]{Gastel19}. As noticed by Gastel with improved Kato type inequality one could improve the results of Nakauchi \cite{Nakauchi01} and Xin--Yang \cite{XY} in the same way that Okayasu \cite{Okayasu94} improved the result of Schoen--Uhlenbeck \cite{SU3}. As explained earlier the only obstacle to regularity is the existence of non-constant tangent maps. Combining a stability inequality, a corollary of Bochner formula, and a mixed Cauchy--Schwarz--Kato\footnote{In \Cref{rem:mixedinsteadofkato} we point out that using this inequality in place of the optimal Kato inequality (which we prove later in \Cref{lem:Kato-optimal}) gives a better constant.} inequality we prove that any stable stationary tangent $p$-harmonic map must be constant.  

\begin{theorem}\label{th:regularitycloseto2}
If $p \in [2,\frac{3+\sqrt{3}}{2}]$, then any stable stationary $p$-harmonic map $u \colon B^3 \to \S^3$ is regular. 
\end{theorem}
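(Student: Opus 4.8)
The plan is to reduce regularity to the nonexistence of nonconstant stable-stationary tangent maps via \Cref{rem:HL-tangentmapsforstablestationary}, and then to rule these out by combining three ingredients into a single integral inequality that forces the map to be constant. By that remark, since $p < 3$, it suffices to show that every stable-stationary $p$-harmonic tangent map $u \colon \R^3 \to \S^3$ is constant. Such a $u$ is $0$-homogeneous, so it is determined by its restriction $u \colon \S^2 \to \S^3$, and all quantities reduce to integrals over $\S^2$. First I would rewrite the stability inequality \eqref{eq:p-stability} and the Euler--Lagrange equation in terms of this spherical restriction, exploiting homogeneity to replace $\R^3$-derivatives by tangential derivatives on $\S^2$ together with the explicit radial scaling $|\nabla u|^2 = |x|^{-2}|\nabla_T u|^2$.

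The core of the argument is to assemble three separate estimates for $u \colon \S^2 \to \S^3$, following the Schoen--Uhlenbeck--Okayasu--Gastel scheme. First, a stability inequality obtained by testing \eqref{eq:p-stability} with a carefully chosen variation (the natural choice being built from $|\nabla u|$ itself, as in the $p=2$ case) should yield a lower bound on $\int_{\S^2}|\nabla|\nabla u||^2$ in terms of $\int_{\S^2}|\nabla u|^4$ and $\int_{\S^2}|\nabla u|^2$. Second, a corollary of the Bochner formula for $p$-harmonic maps should bound $\int_{\S^2}|\nabla^2 u|^2$ from above by the same two quantities. Third --- and this is where the sharp input enters --- I would apply the \emph{mixed Cauchy--Schwarz--Kato inequality} (the one flagged in the footnote as giving a better constant than the optimal Kato inequality \Cref{lem:Kato-optimal}), which controls $|\nabla|\nabla u||^2$ pointwise by a fraction of $|\nabla^2 u|^2$ plus possibly lower-order terms involving $|\nabla u|^4$. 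Chaining these three gives an inequality of the schematic form $A \int_{\S^2}|\nabla u|^4 + B \int_{\S^2}|\nabla u|^2 \le 0$.

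The conclusion is then purely algebraic: one must verify that the coefficients $A = A(p)$ and $B = B(p)$ are both strictly positive (or at least nonnegative with the combination forcing $\nabla u \equiv 0$) precisely on the range $p \in [2, \tfrac{3+\sqrt 3}{2}]$. Positivity of $B$ should be automatic, while positivity of $A$ is the binding constraint that determines the endpoint $p_0 = \tfrac{3+\sqrt 3}{2}$; I would expect the threshold to emerge from solving the quadratic inequality in $p$ produced by substituting the sharp Kato constant and the improved Bochner coefficient. Once $A, B > 0$, the integral inequality forces $\int_{\S^2}|\nabla u|^2 = 0$, hence $u$ is constant, completing the tangent-map exclusion and thus the regularity proof.

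The main obstacle I anticipate is the bookkeeping of constants: each of the three inequalities must be stated with its optimal (or best available) constant, and the Kato-type step in particular is delicate because the pointwise inequality $|\nabla|\nabla u||^2 \le \kappa |\nabla^2 u|^2$ is not simply the trivial $\kappa = 1$ bound but a refined one specific to maps into $\S^3$ (equivalently, maps with the given target dimension), and squeezing the constant from Gastel's $\tfrac{32}{15}$ up to $\tfrac{3+\sqrt 3}{2}$ depends on using the \emph{mixed} form rather than the optimal Kato inequality alone. Getting the dimension-dependent numerology exactly right --- so that the algebraic threshold lands on $\tfrac{3+\sqrt 3}{2}$ rather than a nearby value --- is the part most likely to require care, and I would reserve the actual sharp-constant Kato inequality for the separate treatment in \Cref{s:Kato}, invoking here only the mixed inequality that the argument genuinely needs.
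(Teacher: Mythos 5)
Your proposal follows essentially the same route as the paper: reduction to stable-stationary tangent maps via \Cref{rem:HL-tangentmapsforstablestationary}, then the three-ingredient chain (stability inequality, Bochner corollary, mixed Cauchy--Schwarz--Kato inequality with constant $\tfrac{3}{p}$) producing a two-term integral inequality whose higher-power coefficient $2p(3-p)-3$ is indeed the binding constraint that yields the threshold $p \le \tfrac{3+\sqrt{3}}{2}$. The only imprecision is that for general $p$ the two integrals are of $|\nabla u|^{p}$ and $|\nabla u|^{p+2}$ rather than $|\nabla u|^{2}$ and $|\nabla u|^{4}$, but your schematic otherwise matches the paper's argument exactly.
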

Our proof will show that any tangent map $u \colon \R^3 \to \S^3$ must be constant. As outlined in the introduction, there are three ingredients: 

\begin{lemma}\label{lem:ingredients}
For a stable stationary $p$-harmonic tangent map $u \colon \R^3 \to \S^3$, the following estimates hold: 
\begin{enumerate}[label=(\alph*)]
\item\label{it:stability} Stability inequality: \\
\begin{gather*}
\int_{\S^2} |\nabla u|^{p-4} \left( 3 |\nabla u|^2 |\nabla |\nabla u||^2 + (p-2) |\la \nabla u, \nabla |\nabla u| \ra|^2 \right) \\
\ge (3-p) \int_{\S^2} |\nabla u|^{p+2} - \frac 34 (3-p)^2 \int_{\S^2} |\nabla u|^p; 
\nonumber
\end{gather*}
\item\label{it:bochner} Bochner formula corollary: \\
\begin{equation*}
\int_{\S^2} |\nabla u|^{p-2} (|\nabla^2 u|^2 + (p-2) |\nabla |\nabla u||^2) \le \frac{1}{2} \int_{\S^2} |\nabla u|^{p+2} - \int_{\S^2} |\nabla u|^p;
\end{equation*}
\item\label{it:mixedC-S-K} Mixed Cauchy--Schwarz--Kato inequality: 
\begin{equation*}
3 |\nabla |\nabla u||^2 + (p-2) \left |\la \tfrac{\nabla u}{|\nabla u|}, \nabla |\nabla u| \ra \right |^2 
\le \frac{3}{p} \left( |\nabla^2 u|^2 + (p-2) |\nabla |\nabla u||^2 \right)
\end{equation*}
at all points where $\nabla u \neq 0$.
\end{enumerate}
\end{lemma}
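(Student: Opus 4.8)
The plan is to reduce all three estimates to statements about the restriction $\omega \coloneqq u|_{\S^2}$ and then treat each part in turn. Since $u$ is $0$-homogeneous, $u(x)=\omega(x/|x|)$, the radial derivative vanishes, so $\nabla u = \tfrac1r\nabla_T\omega$; substituting into the Euler--Lagrange equation and factoring out the radial weight $r^{1-p}$ (which contributes nothing to the divergence because $\nabla_T\omega$ is tangential to the spheres) shows that $\omega$ is an intrinsic $p$-harmonic map $\S^2\to\S^3$, i.e. $-\operatorname{div}_{\S^2}(|\nabla\omega|^{p-2}\nabla\omega) = |\nabla\omega|^p\omega$. The condition $p<3$ is exactly what makes the radial integral $\int_0^1 r^{2-p}\dif r$ finite, and after this reduction I identify $|\nabla u|$ on $\S^2$ with $|\nabla\omega|$ and read all three inequalities as statements on the closed surface $\S^2$.

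For part \ref{it:stability} I would insert admissible fields into the stability inequality \eqref{eq:p-stability}. For each vector $a$ in an orthonormal basis of $\R^4$, set $\varphi_a \coloneqq \psi(|x|)\,|\nabla u|\,(a-(a\cdot u)u)$, where $a-(a\cdot u)u$ is the projection of $a$ onto $T_u\S^3$ (so $\varphi_a\cdot u=0$ and $\varphi_a$ is admissible) and $\psi$ is a radial profile. Summing \eqref{eq:p-stability} over the basis and using the contractions $\sum_a a^\alpha a^\beta=\delta^{\alpha\beta}$, $\sum_a(a\cdot u)a^\alpha=u^\alpha$ and $\sum_a(a\cdot\partial_i u)a^\alpha=\partial_i u^\alpha$ collapses the $a$-dependence. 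The leading (tangential) pieces of $|\nabla(\psi|\nabla u|)|^2$ and of $(\nabla u\cdot\nabla\varphi_a)^2$ reproduce exactly the two quadratic forms $3|\nabla u|^2|\nabla|\nabla u||^2$ and $(p-2)|\la\nabla u,\nabla|\nabla u|\ra|^2$ on the left, with the factor $3=\dim\S^3$ coming from $\sum_a|a-(a\cdot u)u|^2=3$. The potential term $-|\nabla u|^2|\varphi_a|^2$ together with the derivatives of the projection gives the $(3-p)\int|\nabla u|^{p+2}$ contribution, while the radial part $|\nabla u|^2\psi'(r)^2$, after an integration by parts in $r$ (a one-dimensional Hardy-type optimization), produces the $\int|\nabla u|^p$ term. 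The sharp coefficients $3-p$ and $\tfrac34(3-p)^2$ come out of this radial optimization, with $3-p$ recording the scaling exponent; keeping these constants sharp is the bookkeeping cost of this part (at $p=2$ the whole estimate collapses to Schoen--Uhlenbeck's first inequality).

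For part \ref{it:bochner} I would run the Bochner--Eells--Sampson identity for the $p$-harmonic map $\omega$. Differentiating its equation, pairing with $\nabla\omega$ and weighting by $|\nabla\omega|^{p-2}$ yields a $p$-Bochner formula whose lower-order remainder is precisely $|\nabla\omega|^{p-2}(p-2)|\nabla|\nabla\omega||^2$ (the term appearing on the left of \ref{it:bochner}) and whose zeroth-order terms are dictated by curvature. The Ricci curvature of $\S^2$, equal to its metric, contributes $-\int|\nabla\omega|^p$; the unit sectional curvature of the target $\S^3$ contributes a term of the form $\int|\nabla\omega|^{p-2}(|\nabla\omega|^4-|\omega^\ast g|^2)$. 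Integrating over the closed surface $\S^2$ annihilates the divergence, and I finish by using that $\omega$ has rank at most $2$: writing $\sigma_1,\sigma_2$ for the singular values of $\nabla\omega$ one has $|\omega^\ast g|^2=\sigma_1^4+\sigma_2^4\ge\tfrac12(\sigma_1^2+\sigma_2^2)^2=\tfrac12|\nabla\omega|^4$, which turns the target term into the clean bound $\tfrac12\int|\nabla\omega|^{p+2}$ and gives \ref{it:bochner}. (For $p=2$ this is exactly Schoen--Uhlenbeck's second estimate.)

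Part \ref{it:mixedC-S-K} is a pointwise statement at a point where $\nabla u\neq0$, and I expect it to be the main obstacle, since it is where the sharp constant $\tfrac3p$ must be squeezed out. Write $H=\nabla^2 u$ (a $\R^4$-valued symmetric form on the $2$-dimensional tangent space) and $T=\nabla|\nabla u|=\tfrac1{|\nabla u|}\la H,\nabla u\ra$. The available constraints are $u\cdot\nabla u=0$ (whence $\la H,u\ra=-(\nabla u)^{\!\top}\nabla u$), the $p$-harmonic relation fixing the trace of $H$ in terms of $|\nabla u|^2u$, and $\dim=2$. I would normalize $\nabla u$ by its singular value decomposition, decompose $H$ into the part determined by the trace and by $T$ and its orthogonal complement, and reduce the claim to optimizing a finite-dimensional quadratic ratio; equivalently, I would combine an improved Kato inequality bounding $|T|^2$ with a Cauchy--Schwarz bound on the mixed term $|\la\tfrac{\nabla u}{|\nabla u|},T\ra|^2$, the two being coupled through the $(p-2)$ weights on both sides. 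The crux is that the crude bounds $|T|^2\le|H|^2$ and $|\la\tfrac{\nabla u}{|\nabla u|},T\ra|^2\le|T|^2$ are jointly too weak, so one must exploit the rank-$2$ structure and the trace constraint to improve them simultaneously and land precisely on the extremal constant $\tfrac3p$.
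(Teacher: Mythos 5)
Your treatment of parts \ref{it:stability} and \ref{it:bochner} is essentially the paper's: the paper simply defers to Nakauchi's computations (\cite[Lem.~1 and Lem.~3]{Nakauchi96}) with the single modification of \emph{not} applying the premature Cauchy--Schwarz steps $|\la \nabla u, \nabla |\nabla u| \ra| \le |\nabla u|\,|\nabla |\nabla u||$ and $|\nabla|\nabla u||\le|\nabla^2 u|$. Your test fields $\varphi_a=\psi(|x|)\,|\nabla u|\,(a-(a\cdot u)u)$, the Bochner--Eells--Sampson identity, and the rank-two bound $\sigma_1^4+\sigma_2^4\ge\tfrac12(\sigma_1^2+\sigma_2^2)^2$ are exactly the standard ingredients, and your reduction of a tangent map to an intrinsic $p$-harmonic map on $\S^2$ is correct. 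No complaint there.

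The genuine gap is in part \ref{it:mixedC-S-K}, which is the new content of the lemma and the source of the improvement in \Cref{th:regularitycloseto2}. You set the stage correctly --- pointwise statement, singular-value normalization of $\nabla u$, the trace constraint coming from the $p$-harmonic system, reduction to a finite-dimensional quadratic optimization --- but you stop at exactly the point where the proof must happen, asserting that one ``must exploit the rank-$2$ structure and the trace constraint \dots and land precisely on the extremal constant $\tfrac3p$'' without doing so. What is missing is concretely the following. (i) After choosing bases with $u_1=\alpha_1e_1$ and $u_2=\alpha_2e_2$ orthogonal, the quantity $|\nabla|\nabla u||^2$ splits as $(\alpha_1u_{11}^1+\alpha_2u_{12}^2)^2+(\alpha_2u_{22}^2+\alpha_1u_{12}^1)^2$, so the whole inequality decouples into two independent scalar inequalities, one per active target direction, the first involving only the triple $(u_{11}^1,u_{12}^2,u_{22}^1)$ and the second only $(u_{22}^2,u_{12}^1,u_{11}^2)$; note the grouping of the mixed derivative $u_{12}^2$ with the \emph{first} block, which is what makes the decoupling work and which your ``$T$ and its orthogonal complement'' decomposition does not capture. (ii) The corresponding component of the $p$-harmonic system is used to eliminate $u_{22}^1$ (resp.\ $u_{11}^2$). (iii) In rotated variables $z=\theta_1x+\theta_2y$, $w=\theta_2x-\theta_1y$ the surviving inequality reduces to the nonnegativity of the quadratic form $p\theta_1^2z^2+3\theta_1\theta_2zw+\tfrac{3}{p-2}w^2$, verified by a discriminant check valid for all $\theta_1^2+\theta_2^2=1$ and $p\in(2,3]$. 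Without step (iii) the constant $\tfrac3p$ is only conjectured, not proved, so as written the proposal does not establish the lemma.
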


\begin{remark}
The constant $\frac 3p$ in \Cref{lem:ingredients}\ref{it:mixedC-S-K} is sharp in a weak, pointwise sense: the function $u \colon \R^2 \to \R$ given by $u(x,y) = x+xy$ solves the $p$-harmonic equation at $(0,0)$ and yields equality. One can also modify it to a map $u \colon \S^2 \to \S^3$ with similar properties, just by using exponential coordinates.
\end{remark}

Proofs of \Cref{lem:ingredients} \ref{it:stability} and \ref{it:bochner} were essentially given by Nakauchi. For \ref{it:stability}, one has to follow the proof in \cite[Lem.~3]{Nakauchi96} but omitting the step where $|\la \nabla u, \nabla |\nabla u| \ra|$ is bounded by $|\nabla u| |\nabla |\nabla u||$. One side effect of this alteration is an improved constant in front of the $p$-energy: $\frac{3}{4}(3-p)^2$ instead of $\frac{1}{4}(p+1)(3-p)^2$. Note however that this particular improvement is irrelevant in our applications. For \ref{it:bochner}, the derivation in \cite[Lem.~1]{Nakauchi96} applies (see also \cite[Lem.~2]{Nakauchi01}), but this time one omits the step $|\nabla|\nabla u|| \le |\nabla^2 u|$.

\begin{remark}\label{rem:mixedinsteadofkato}
Part \Cref{lem:ingredients}\ref{it:mixedC-S-K} compares the left-hand sides of \ref{it:stability} and \ref{it:bochner} with the best possible constant $\frac{3}{p}$. If one applied separately Cauchy--Schwarz $|\la \nabla u, \nabla |\nabla u| \ra| \le |\nabla u| |\nabla |\nabla u||$ and optimal Kato inequality $2 |\nabla|\nabla u||^2 \le |\nabla^2 u|^2$, the constant would be $\frac{p+1}{p}$. However, one can inspect the proofs of these two inequalities to see that equality cannot hold in both at the same time. Because of this, there is an advantage in considering an ad hoc mixed Cauchy--Schwarz--Kato inequality. 
\end{remark}

Before proving \Cref{lem:ingredients}\ref{it:mixedC-S-K}, let us put all the pieces together and see how regularity of $p$-harmonic follows from Lemma \ref{lem:ingredients}

\begin{proof}[Proof of Theorem \ref{th:regularitycloseto2}]
\leavevmode

Assume that $u\colon \R^3\to\S^3$ is a stable stationary $p$-harmonic tangent map. By \Cref{rem:HL-tangentmapsforstablestationary} it suffices to prove that $u$ must be a constant map.

Applying Lemma \ref{lem:ingredients} to $u$ --- comparing the left-hand sides of \ref{it:stability} and \ref{it:bochner} via \ref{it:mixedC-S-K} --- produces an inequality involving just the integrals of $|\nabla u|^p$ and $|\nabla u|^{p+2}$. After rearranging, it reads 
\[
\tfrac{3}{2}(4-p(3-p)^2) \int_{\S^2} |\nabla u|^p 
+ (2p(3-p)-3) \int_{\S^2} |\nabla u|^{p+2} \le 0.
\]
For $p \in [2,3]$, it is easily seen that $p(3-p)^2 \le 3 \cdot 1^2$ and hence the coefficient $\tfrac{3}{2}(4-p(3-p)^2)$ is always positive. However, the coefficient $2p(3-p)-3$ is nonnegative only for $p$ between $2$ and $\frac{3+\sqrt{3}}{2}$. For $p$ in this interval, we can conclude that $\int_{\S^2} |\nabla u|^p \le 0$ and hence $u$ is constant. 
\end{proof}

\begin{proof}[Proof of \Cref{lem:ingredients}\ref{it:mixedC-S-K}]\leavevmode

Let us consider a point $q \in \S^2$ for which $\nabla u(q) \neq 0$. We will use exponential coordinates around $q \in \S^2$ and $u(q) \in \S^3$, thus reducing the problem to the flat case. Indeed, the $p$-harmonic system in coordinates takes the form \cite[p.~iii]{Hun94}
\[
- \frac{1}{\sqrt{\det \gamma}} \pl_\beta \left( \sqrt{\det \gamma} \left( \gamma^{ij} g_{ab} u_i^a u_j^b \right)^{\frac{p-2}{2}} \gamma^{ij} u_i^c \right) 
= \left( \gamma^{ij} g_{ab} u_i^a u_j^b \right)^{\frac{p-2}{2}} \gamma^{ij} \Gamma_{ab}^{c} u_i^a u_j^b
\quad \text{for } c=1,2,3,
\]
where $u^a$ are components of $u$, $u_i^a$ their partial derivatives $\partial_i u^a$, $\gamma_{ij}$ and $g_{ab}$ are coefficients of metric tensors $\gamma$ on $\S^2$ and $g$ on $\S^3$ ($\gamma^{ij}$ denoting the inverse matrix), $\Gamma_{ab}^c$ are Christoffel symbols on $\S^3$, and the summation over $i,j,a,b$ is implicitly understood by Einstein convention. However, in exponential coordinates we have $\gamma_{ij} = \delta_{ij}$, $g_{ab} = \delta_{ab}$ up to first order at $q$, hence also all $\Gamma_{ab}^c$ vanish at that point. This simplifies the equation at \emph{one particular point} $q \in \S^2$ to 
\begin{equation}\label{eq:p-harmonicflat}
0 = \sum_{i=1}^2 u_{ii}^a + (p-2) \sum_{i,j=1}^2\sum_{b=1}^3 u_i^a u_j^b u_{ij}^b,
\quad \text{for all } a=1,2,3,
\end{equation}
which is the same as the equation for $p$-harmonic maps $u \colon \R^2 \to \R^3$. Since we are interested in a pointwise inequality, we shall only employ \eqref{eq:p-harmonicflat} at $q$. 

When using exponential coordinates, we may still choose coordinates in the tangent spaces $T_q \S^2$ and $T_{u(q)} \S^3$ to our advantage, which we will now do. First, we may assume that $|\nabla u(q)| = 1$, as in general we could just work with $\frac{\nabla u(q)}{|\nabla u(q)|}$. In order to simplify the coordinate calculations, we shall choose a basis of $T_q \S^2$ for which the partial derivatives $u_1,u_2 \in T_{u(q)} \S^3$ are orthogonal. To see why it is possible, start with any basis and note that $\la u_1,u_2 \ra$ changes sign as one rotates it by $\pi/2$. Then, we may choose an orthonormal basis $e_1,e_2,e_3 \in T_{u(q)} \S^3$ for which $u_1 = \alpha_1 e_1$ and $u_2 = \alpha_2 e_2$. With our choice of bases and notation, \eqref{eq:p-harmonicflat} takes the convenient form 
\begin{equation}\label{eq:p-harmonicflat-coordinates}
\begin{cases}
0 = u_{11}^1+u_{22}^1 + (p-2)\alpha_1(\alpha_1 u_{11}^1 + \alpha_2 u_{12}^2) \\
0 = u_{22}^2+u_{11}^2 + (p-2)\alpha_2(\alpha_2 u_{22}^2 + \alpha_1 u_{12}^1) \\
0 = u_{11}^3+u_{22}^3.
\end{cases}
\end{equation}
We may use the first two equations to express $u_{22}^1$, $u_{11}^2$ in terms of other second order derivatives; the third equation will not be exploited. The expressions appearing in \Cref{lem:ingredients}\ref{it:mixedC-S-K} can be written as
\begin{equation}\label{eq:mixedCSK-computations}
\begin{split}
1 = |\nabla u(q)|^2 &= \alpha_1^2+\alpha_2^2,\\
|\nabla|\nabla u|(q)|^2 &= (\alpha_1 u_{11}^1 + \alpha_2 u_{12}^2)^2 + (\alpha_2 u_{22}^2 + \alpha_1 u_{12}^1)^2, \\
\left |\la \tfrac{\nabla u(q)}{|\nabla u(q)|}, \nabla |\nabla u|(q) \ra \right |^2 
&= \alpha_1^2 (\alpha_1 u_{11}^1 + \alpha_2 u_{12}^2)^2 + \alpha_2^2 (\alpha_2 u_{22}^2 + \alpha_1 u_{12}^1)^2
\end{split}
\end{equation}
and
\begin{equation}\label{eq:mixedCSK-computations2}
\begin{split}
|\nabla^2 u(q)|^2 &= \brac{(u_{11}^1)^2 + 2 (u_{12}^2)^2 + (u_{22}^1)^2} + \brac{(u_{22}^2)^2 + 2 (u_{12}^1)^2 + (u_{11}^2)^2)} + \sum_{i,j=1}^2 (u_{ij}^3)^2\\
&= I_1 + I_2 + I_3.
\end{split}
\end{equation}
In order to prove \Cref{lem:ingredients}\ref{it:mixedC-S-K} it suffices to show that 
\begin{equation}
\label{eq:mixed-CS-Kato-raw}
\begin{split}
3&(\theta_1 x + \theta_2 y)^2 + (p-2) \theta_1^2 (\theta_1 x + \theta_2 y)^2 \\
&\le \frac 3p \Big( x^2 + 2y^2 + (x + (p-2) \theta_1 (\theta_1 x + \theta_2 y))^2 + (p-2) (\theta_1 x + \theta_2 y)^2 \Big), 
\end{split}
\end{equation}
holds for all $x,y \in \R$ and $\theta_1,\, \theta_2\in[0,1]$ such that $\theta_1^2 + \theta_2^2 = 1$.

Indeed, taking $x = u_{11}^1$, $y = u_{12}^2$, $\theta_1 = \alpha_1$, $\theta_2=\alpha_2$, we see from the equation \eqref{eq:p-harmonicflat-coordinates} that $-u_{22}^1 = x + (p-2)\theta_1(\theta_1 x + \theta_2 y)$, and then \eqref{eq:mixed-CS-Kato-raw} produces \emph{part} of \Cref{lem:ingredients}\ref{it:mixedC-S-K}, i.e., a~comparison with the term $I_1$ involving Hessian components $u_{11}^1$, $u_{12}^2$, $u_{22}^1$. To obtain the other part, which involves the term $I_2$ with components $u_{22}^2$, $u_{12}^1$, $u_{11}^2$, one just switches the indices and take $x=u_{22}^2$, $y=u_{12}^1$, $\theta_1 = \alpha_2$, $\theta_2=\alpha_1$. We obtain a slightly stronger version of \Cref{lem:ingredients}\ref{it:mixedC-S-K} --- without the terms $I_3$ of $|\nabla^2 u|^2$ on the right-hand side.

Thus, we are left with proving the elementary inequality \eqref{eq:mixed-CS-Kato-raw}. This becomes easier in rotated variables: $z = \theta_1 x + \theta_2 y$ and $w = \theta_2 x - \theta_1 y$ in place of $x$ and $y$. After some rearranging and multiplying both sides by $\frac{p}{2(p-2)}$ (note that $p>2$ in our case), \eqref{eq:mixed-CS-Kato-raw} becomes 
\[
0 
\le p \theta_1^2 z^2 + 3 \theta_1\theta_2 zw + \frac{3}{p-2} w^2.
\]
Looking at the right-hand side as a quadratic form in $z,w$, it is now straightforward to check that the discriminant is nonpositive for all choices of $\theta_1,\theta_2 \in [0,1]$ and $p \in [2,3]$. This finishes the proof of \eqref{eq:mixed-CS-Kato-raw} and thus also \Cref{lem:ingredients}\ref{it:mixedC-S-K}. 
\end{proof}

\section{Sharp Kato inequality}\label{s:Kato}
As discussed earlier, Kato inequalities are an important tool in studying regularity of $p$-harmonic maps. The progress made in Theorem \ref{th:regularitycloseto2} --- as compared to Gastel's methods and results \cite{Gastel19} --- is due to sharp Kato inequality in two-dimensional domains, proved below in \Cref{lem:Kato-optimal} (however, with the ad hoc estimate of \Cref{lem:ingredients} \ref{it:mixedC-S-K} the final result may be improved further). As observed by Gastel, improvements in Kato inequalities in higher dimensions will lead to new regularity results for minimizing $p$-harmonic maps. 

In this section, we prove sharp Kato inequality for $p$-harmonic maps in two-dimensional domains, for all $p \in (1,\infty)$. We also share some general remarks on the problem and point out to certain open problems. Let us start with defining the optimal constant. 

\begin{definition}
Let $n,d \in \N$, $p \ge 1$, and consider a $C^2$ map $u \colon \R^n \to \R^d$ satisfying the $p$-harmonic map equation $\div(|\nabla u|^{p-2} \nabla u) = 0$ at the point $0 \in \R^n$, with $\nabla u(0) \neq 0$. The optimal Kato constant $\kappa(p,n,d) > 0$ is the largest number such that 
\begin{equation}
\label{eq:df-optimal-Kato}
\kappa(p,n,d) |\nabla |\nabla u|(0)|^2 \le |\nabla^2 u(0)|^2
\end{equation}
for all such maps $u$. 
\end{definition}

\begin{remark}
For $p$-harmonic maps between manifolds $u \colon \cM^n \to \cN^d$ the inequality \eqref{eq:df-optimal-Kato} holds with the same constant $\kappa(p,n,d)$. This is easily seen in exponential coordinates, as in the proof of Theorem \ref{th:regularitycloseto2}. 
\end{remark}

\begin{remark}
Note that the only relevant information about $u$ is encoded in $\nabla u(0)$ and $\nabla^2 u(0)$. Without loss of generality, one can assume these to have length $1$. Thus, the optimal Kato constant can be characterized as 
\begin{equation}
\label{eq:elementary-optimal}
\kappa(p,n,d)^{-1} = \sup_{v,w} \sum_{i} \left( \sum_{j,a} v_j^a w_{ij}^a \right)^2,
\end{equation}
where the supremum is taken over all unit length vectors $v \in \R^n \otimes \R^d$ and unit length $n \times n$ symmetric $\R^d$-valued matrices $w \in \operatorname{Sym}_n(\R^d)$ satisfying the constraint  
\begin{equation}
\label{eq:elementary-p-harmonic}
0 = \sum_{i} w_{ii}^a + (p-2) \sum_{i,j,b} v_i^a v_i^b w_{ij}^b
\quad \text{for all } a=1,\ldots,d.
\end{equation}
This elementary characterization is useful for finding numerical approximations. 
\end{remark}

Let us review what we know about the optimal Kato constant $\kappa(p,n,d)$ for various choices of parameters $p,n,d$.

\begin{proposition}\label{le:kato-constantreview}
Let $\kappa(p,n,d)$ be the constant from \eqref{eq:df-optimal-Kato}. Then the following holds:
\begin{enumerate}[label=(\roman*)]
\item \label{it:p=2} $\kappa(2,n,d) = \frac{n}{n-1}$;
\item \label{it:d=1}$\kappa(p,n,1) = \min(2, 1+\frac{(p-1)^2}{n-1})$;
\item \label{it:n=2}$\kappa(p,2,d) = \kappa(p,2,1) = \min(2, 1+(p-1)^2)$;
\item \label{it:ineqn=3} $\kappa(p,3,2) < \kappa(p,3,1)$ for some choices of $p$, in particular for the critical exponent $p = 1+\sqrt{2}$;
\item \label{it:ge1} $\kappa(p,n,d) \ge 1$ follows trivially, but $\kappa(p,n,d) = 1$ only for $p = 1$; 
\item \label{it:mono} In general, $\kappa(p,n,d)$ is non-increasing in $d$, but $\kappa(p,n,d) = \kappa(p,n,n)$ for all $d \ge n$  (see below).
\end{enumerate}
\end{proposition}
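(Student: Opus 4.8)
The plan is to work throughout with the elementary characterization \eqref{eq:elementary-optimal}. Writing $b_i = \sum_{j,a} v_j^a w_{ij}^a$, so that $b = \sum_a w^a v^a \in \R^n$ and $\kappa^{-1} = \sup |b|^2$, the first observation is that the whole problem is homogeneous of degree one in $w$: both $|b|$ and each side of the constraint \eqref{eq:elementary-p-harmonic} scale linearly with $w$. Hence one may drop the normalization $|w|=1$ and instead compute $\kappa^{-1}(p,n,d) = \sup |b|^2/|w|^2$ over unit $v$ satisfying the constraint, which in compact form reads $\operatorname{tr} w^a = -(p-2)\langle v^a, b\rangle$ for each $a$. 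I would establish the structural statements \ref{it:ge1} and \ref{it:mono} first, then the explicit values \ref{it:p=2}--\ref{it:n=2}, and finally the strict inequality \ref{it:ineqn=3}.

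For \ref{it:ge1}, the bound $\kappa \ge 1$ is the plain Kato inequality: by Cauchy--Schwarz $|b_i| \le |v|\,|w_{i\cdot\cdot}|$ where $w_{i\cdot\cdot}=(w_{ij}^a)_{j,a}$, and summing in $i$ gives $|b|^2 \le |w|^2$, with no use of the equation. For the equality case I would analyse when $\sup|b|^2/|w|^2 = 1$: equality forces $w_{ij}^a = c_i v_j^a$ for some $c \in \R^n$, and symmetry of $w$ in $ij$ then forces every nonzero $v^a$ to be parallel to $c$, so each $w^a$ is a multiple of $cc^T$. Substituting this rank-one ansatz into the $p$-harmonic constraint and using the normalizations collapses everything to the single scalar relation $(2-p)^{-1}=1$, i.e. $p=1$; since the feasible set is compact the supremum is attained, so $p\neq 1$ gives $\kappa>1$ strictly. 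For \ref{it:mono}, monotonicity in $d$ is immediate by zero-padding a map $\R^n\to\R^d$ to $\R^n\to\R^{d+1}$, which preserves $\nabla u$, $\nabla^2 u$ and the equation, so the feasible set only grows. Stabilization follows from the key remark that $b_i=\sum_j \langle \partial_j u,\partial_i\partial_j u\rangle$ and the constraint $\Delta u = -(p-2)\sum_i b_i\,\partial_i u$ see only the orthogonal projection of the Hessian vectors $\partial_i\partial_j u$ onto $V_0:=\operatorname{span}\{\partial_1 u,\dots,\partial_n u\}$, a space of dimension $\le n$. Replacing each $\partial_i\partial_j u$ by its projection onto $V_0$ leaves $b$ unchanged, does not increase $|w|$, and preserves the constraint; after rescaling this shows the supremum is realised inside an $n$-dimensional target, so $\kappa(p,n,d)=\kappa(p,n,n)$ for $d\ge n$.

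For the explicit constants I would solve the constrained maximisation directly. In \ref{it:p=2} the constraint becomes $\operatorname{tr} w^a=0$; estimating $|b|=\sup_{|\xi|=1}\sum_a \langle v^a, w^a\xi\rangle \le \sup_\xi (\sum_a|w^a\xi|^2)^{1/2}$ by Cauchy--Schwarz and then using $|w^a\xi|^2\le \tfrac{n-1}{n}|w^a|^2$ for trace-free symmetric $w^a$ (the eigenvalue bound $\max_k\mu_k^2\le\tfrac{n-1}{n}\sum_k\mu_k^2$ under $\sum_k\mu_k=0$) gives $|b|^2\le\tfrac{n-1}{n}$, with equality already for $d=1$. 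For \ref{it:d=1} I would rotate so that $v=e_1$; the objective is the squared norm of the first column of $w$, the constraint is $(p-1)w_{11}+\sum_{i\ge2}w_{ii}=0$, and minimising the Frobenius cost of the free diagonal entries (equal split, by Cauchy--Schwarz) reduces everything to a one-parameter problem linear in $A=w_{11}^2$, whose optimum is $\min(2,1+\tfrac{(p-1)^2}{n-1})$. Part \ref{it:n=2} exploits the identity $(s^a)^2=\tfrac12|s^a|^2 I$ for trace-free symmetric $2\times2$ matrices: decomposing $w^a=\tfrac12(\operatorname{tr} w^a)I+s^a$ and rewriting the constraint as $(I+\tfrac{p-2}{2}P)b=\sum_a s^a v^a$ with $P=\sum_a v^a(v^a)^T$, a Cauchy--Schwarz bound on the right-hand side reduces the problem to a scalar optimisation over the eigenvalues of $P$, which is extremised at rank-one $P$, i.e. at a $d=1$ configuration, giving $\kappa(p,2,d)=\kappa(p,2,1)=\min(2,1+(p-1)^2)$.

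The main work is \ref{it:ineqn=3}, where I must exhibit a genuinely two-dimensional configuration beating the scalar bound. At $p=1+\sqrt2$ one has $\kappa(p,3,1)=2$, so the goal is a feasible $(v,w)$ with $n=3$, $d=2$ and $|b|^2/|w|^2>\tfrac12$. I would take $v^1=\tfrac{1}{\sqrt2}e_1$, $v^2=\tfrac{1}{\sqrt2}e_2$ and keep only the entries $w^1_{11}$ and $w^2_{12}$ (plus the two free diagonal entries of $w^1$, split equally to satisfy its trace constraint). This decouples the optimisation into a $2\times2$ generalised eigenvalue problem for $(w^1_{11},w^2_{12})$ whose value is $\tfrac{14}{25+2\sqrt2}$; since $28>25+2\sqrt2$ this exceeds $\tfrac12$, proving $\kappa(1+\sqrt2,3,2)<2=\kappa(1+\sqrt2,3,1)$. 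The expected obstacle is precisely locating this improving direction: the scalar maximiser at the critical exponent is degenerate, and one must recognise that coupling the diagonal second derivative of one target component with a mixed second derivative of the other lowers the effective cost of the trace constraint — a structure most easily discovered through the numerical search that \eqref{eq:elementary-optimal} is designed to enable.
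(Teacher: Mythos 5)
Your proposal reaches the correct conclusions and, for parts (iv), (v) and (vi), essentially coincides with the paper's argument: the equality analysis in (v) (Cauchy--Schwarz equality forcing $w_{ij}^a=c_iv_j^a$, symmetry forcing all $v^a$ parallel, and the constraint then collapsing to $p=1$) and the zero-padding/projection argument in (vi) are the same as ours, except that you stabilize $d\mapsto\kappa(p,n,d)$ by projecting the Hessian vectors onto $V_0=\operatorname{span}\{\partial_1u,\dots,\partial_nu\}$, whereas the paper discards the target coordinates orthogonal to the image of the differential; these are equivalent, and your version is arguably cleaner since it makes explicit that the constraint and the quantity $b$ only see the $V_0$-component of the Hessian. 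Your example in (iv) is exactly the configuration of \Cref{ex:Kato-example} (normalize $v$, keep $w_{11}^1$, $w_{12}^2$ and split the remaining trace of $w^1$ equally over the two free diagonal slots), and the value $\tfrac{14}{25+2\sqrt{2}}>\tfrac12$ matches. The genuine divergence is in (i)--(iii): the paper simply cites Okayasu for (i) and Chang--Chen--Wei for (ii), and proves (iii) in \Cref{lem:Kato-optimal} by first rotating the domain so that $u_1\perp u_2$ and the target so that $u_1,u_2$ lie along coordinate axes, which decouples the Kato inequality into two scalar quadratic-form inequalities. You instead give direct, self-contained optimizations from the elementary characterization \eqref{eq:elementary-optimal}; this buys a uniform treatment of all six parts from one formula, at the price of redoing known computations.

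One step you should not leave as an assertion: in (iii), the reduction ``a Cauchy--Schwarz bound on the right-hand side reduces the problem to a scalar optimisation over the eigenvalues of $P$, which is extremised at rank-one $P$.'' After writing $(I+\tfrac{p-2}{2}P)b=\sum_a s^av^a$, the bound $\bigl|\sum_a s^av^a\bigr|\le\tfrac{1}{\sqrt2}\bigl(\sum_a|s^a|^2\bigr)^{1/2}$ discards the direction of the right-hand side relative to the eigenbasis of $P$, while the left-hand side depends on that direction through the eigenvalues $1+\tfrac{p-2}{2}\lambda$ and $1+\tfrac{p-2}{2}(1-\lambda)$; moreover $\sum_a|s^a|^2=1-\tfrac12\sum_a(\operatorname{tr}w^a)^2$ couples back to $\langle Pb,b\rangle$ through the constraint. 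So the ``scalar'' problem is really a joint optimisation over the eigenvalue $\lambda$ of $P$, the angle of $b$ in the eigenbasis, and the trace budget, and the claim that it is extremised at rank-one $P$ needs to be checked for both regimes $p\le2$ and $p\ge2$ (the extremal configurations are different: diagonal-dominated versus off-diagonal-dominated). Note that diagonalising $P$ (the Gram matrix of $u_1,u_2$) is precisely the paper's orthogonalisation trick, so once you pass to that eigenbasis you may as well follow the decoupling of \Cref{lem:Kato-optimal}, which disposes of the direction issue by splitting the inequality into two independent scalar ones.
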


\begin{remark}
Interestingly, $\kappa(p,2,d)$ does not depend on $d$, but $\kappa(p,3,d)$ does! Our numerical simulations suggest that optimal constants for $n \ge 3$ indeed depend on $d$, but differ from $\kappa(p,n,1)$ by only a tiny amount (see \Cref{ex:Kato-example}). Since this difference is miniscule, one could potentially still obtain regularity results for $p$-harmonic maps (in domains of dimensions $\ge 4$) by using these Kato inequalities, even with suboptimal constants. However, the difference makes it hard to imagine a rigorous proof of any close-to-optimal Kato inequality. 
\end{remark}
\begin{openproblem}
 Determine the optimal constant in Kato inequality in other cases than mentioned in \Cref{le:kato-constantreview}, starting with $\kappa(p,3,2)$ and $\kappa(p,3,3)$. 
\end{openproblem}

The rest of the section is dedicated to justifying \Cref{le:kato-constantreview}.

\newcommand{\mmbullet}{$\rightarrow$ \ }
\begin{proof}[Proof of \Cref{le:kato-constantreview}]
\textsc{Ad} \ref{it:p=2}: This is the result of Okayasu, see \cite[Lem.~1.1]{Okayasu94}. Let us remark here that since the $2$-harmonic equation is decoupled, $\kappa(2,n,d)$ is easily seen to be independent of the parameter $d$.

\textsc{Ad} \ref{it:d=1}: This is a result of Chang--Chen--Wei, see \cite[Lem.~5.4]{ChaCheWei16}.

\textsc{Ad} \ref{it:n=2}: This is our main contribution to the problem of the optimal constant in Kato inequality and we prove it separately in \Cref{lem:Kato-optimal}.

\textsc{Ad} \ref{it:ineqn=3}: We show this in \Cref{ex:Kato-example} below.

\textsc{Ad} \ref{it:ge1}: For the trivial inequality $\kappa(p,n,d) \ge 1$, one just uses the Cauchy--Schwarz inequality in \eqref{eq:elementary-optimal}. If for some choice of $v$ and $w$ this happened to be an equality, we would have $w_{ij}^a = \lambda_i v_j^a$ for all $i,j,a$ (and some unit $\lambda \in \R^n$). But this leads to $\lambda_i v_j^a = \lambda_j v_i^a$, which means that all vectors $v_i$ are parallel: $v_i^a = \lambda_i z^a$ for some unit $z \in \R^d$. Then the $p$-harmonic system reduces to $(p-1) z = 0$, which is impossible for $p > 1$. However, it follows from the elementary characterization in \eqref{eq:elementary-optimal}, \eqref{eq:elementary-p-harmonic} that the optimal constant $\kappa(p,n,d)$ is actually achieved for some $v,w$. Hence, $\kappa(p,n,d) > 1$ strictly.

For $p = 1$ we actually have $\kappa(1,n,d) = 1$. This is trivial for $n=d=1$, as in this case any function with non-vanishing derivative satisfies the $1$-harmonic equation; in the general case one just needs to add dummy dimensions in both domain and codomain. 

\textsc{Ad} \ref{it:mono}: Note that any $p$-harmonic map $u \colon \R^n \to \R^d$ can be trivially extended to a $p$-harmonic map $v \colon \R^n \to \R^{d+1}$ by taking $v = (u^1,\ldots,u^d,0)$. Both maps lead to the same constant in \eqref{eq:df-optimal-Kato}. Therefore, the optimal constant $\kappa(p,n,d)$ is non-increasing as a function of $d$. 

On the other hand, given a $p$-harmonic map $u \colon \R^n \to \R^d$ with $d > n$, notice that the image of its differential at $0$ is at most $n$-dimensional. For simplicity, we may assume that it is contained in $\R^n \times \{ 0 \}$; then the map $v = (u^1,\ldots,u^n)$ is easily seen to satisfy the $p$-harmonic map equation at $0$, and 
\[
\kappa(p,n,n) |\nabla |\nabla u|(0)|^2 
= \kappa(p,n,n) |\nabla |\nabla v|(0)|^2 
= |\nabla^2 v(0)^2| 
\le |\nabla^2 u(0)^2|.
\]
This shows that the optimal constant $\kappa(p,n,d)$ has to be at least as large as $\kappa(p,n,n)$. 
\end{proof}

\begin{lemma}
\label{lem:Kato-optimal}
The optimal Kato constant $\kappa(p,2,d)$ in \eqref{eq:df-optimal-Kato} equals $\min(2, 1+(p-1)^2)$ for any $p \ge 1$ and $d \in \N$. 
\end{lemma}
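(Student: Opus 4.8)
The plan is to reduce the optimal Kato constant $\kappa(p,2,d)$ to the elementary optimization problem described in \eqref{eq:elementary-optimal}--\eqref{eq:elementary-p-harmonic}, and then exploit the fact that in dimension $n=2$ the image of the differential $\nabla u(0)$ spans at most a $2$-dimensional subspace of $\R^d$. By \Cref{le:kato-constantreview}\ref{it:mono} we already know $\kappa(p,2,d) = \kappa(p,2,2)$ for all $d \ge 2$, and the case $d=1$ is covered by \ref{it:d=1}. So the crux is to prove $\kappa(p,2,2) = \kappa(p,2,1) = \min(2,1+(p-1)^2)$; that is, that adding a second target dimension does \emph{not} decrease the constant when $n=2$. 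First I would set up coordinates exactly as in the proof of \Cref{th:regularitycloseto2}: after normalizing $|\nabla u(0)|=1$, choose a basis of $\R^2$ making the partial derivatives $u_1, u_2$ orthogonal in $\R^d$, and an orthonormal basis $e_1, e_2, e_3, \dots$ of the target so that $u_1 = \alpha_1 e_1$, $u_2 = \alpha_2 e_2$ with $\alpha_1^2 + \alpha_2^2 = 1$. This brings the $p$-harmonic constraint into the explicit coordinate form \eqref{eq:p-harmonicflat-coordinates} and splits $|\nabla^2 u|^2$ into the blocks $I_1, I_2, I_3$ of \eqref{eq:mixedCSK-computations2}.

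Next I would write out both sides of \eqref{eq:df-optimal-Kato} in these coordinates. Using \eqref{eq:mixedCSK-computations}, the quantity $|\nabla|\nabla u||^2$ depends only on the combinations $A \coloneqq \alpha_1 u_{11}^1 + \alpha_2 u_{12}^2$ and $B \coloneqq \alpha_2 u_{22}^2 + \alpha_1 u_{12}^1$, namely $|\nabla|\nabla u||^2 = A^2 + B^2$. The key structural observation is that the $p$-harmonic equations \eqref{eq:p-harmonicflat-coordinates} let one eliminate $u_{22}^1$ and $u_{11}^2$ (expressing them through $A$, $B$ and the remaining free second derivatives), while the third equation $u_{11}^3 = -u_{22}^3$ together with all components $u_{ij}^a$ for $a \ge 3$ contributes only to the right-hand side $|\nabla^2 u|^2$ and never to the left-hand side. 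Hence the minimum of $|\nabla^2 u|^2$ subject to fixed $A, B$ is achieved by setting all those ``extra'' components to zero, which reduces the optimization to the two blocks $I_1, I_2$ involving only the first two target coordinates. At that point the problem is literally the $d=2$ case, and since the optimal configuration turns out to concentrate in a single target direction it collapses further to the scalar computation already done for $\kappa(p,2,1)$ in \ref{it:d=1}.

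Concretely, I would argue that minimizing $I_1 = (u_{11}^1)^2 + 2(u_{12}^2)^2 + (u_{22}^1)^2$ under the constraint fixing $A$, with $u_{22}^1 = -\big(u_{11}^1 + (p-2)\alpha_1 A\big)$ from the first equation of \eqref{eq:p-harmonicflat-coordinates}, is a finite-dimensional quadratic problem; the same for $I_2$ with $B$. Carrying out these two constrained minimizations (a short Lagrange-multiplier or complete-the-square computation) yields a lower bound of the form $|\nabla^2 u|^2 \ge f_p(\alpha_1,\alpha_2)\,(A^2+B^2)$, and the remaining task is to minimize $f_p$ over the circle $\alpha_1^2 + \alpha_2^2 = 1$. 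This one-parameter minimization should reproduce exactly $\min(2, 1+(p-1)^2)$, with the two regimes corresponding to whether the extremal $\alpha$ lies at an endpoint (one partial derivative vanishing, recovering the scalar case) or in the interior. Finally, matching the bound with the explicit extremizer from \ref{it:d=1} (or directly constructing $v, w$ achieving equality, e.g.\ the map $u(x,y) = x + xy$ from the remark after \Cref{lem:ingredients}) shows the constant is attained and hence optimal.

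The main obstacle I anticipate is the circle minimization of $f_p(\alpha_1,\alpha_2)$: one must verify that no mixed direction $\alpha \in (0,1)^2$ beats the endpoints, i.e.\ that the genuinely two-dimensional target configurations never do better than the scalar one. This is where the claim ``$\kappa(p,2,d)$ does not depend on $d$'' is really decided, and it is delicate precisely because the analogous statement \emph{fails} for $n=3$ (as \ref{it:ineqn=3} asserts). I would therefore expect the computation to hinge on a special algebraic feature of $n=2$ --- most likely that with only two domain directions the coupling term $(p-2)\sum v_i^a v_j^b w_{ij}^b$ cannot redistribute curvature across target directions advantageously --- and I would want to isolate that feature cleanly rather than rely on brute-force case analysis, since it is exactly what breaks down in higher domain dimension.
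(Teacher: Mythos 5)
Your plan coincides with the paper's own proof: the same choice of coordinates making $u_1,u_2$ orthogonal, the same decoupling of $|\nabla^2 u|^2$ into the blocks $I_1,I_2$ with the components $u_{ij}^a$, $a\ge 3$, discarded, the same elimination of $u_{22}^1,u_{11}^2$ via the $p$-harmonic system, and the same rotated variables $z=\alpha_1x+\alpha_2y$, $w=\alpha_2x-\alpha_1y$ --- the paper merely verifies the resulting quadratic inequality directly (a discriminant argument for $p>2$, convexity in $p$ for $1\le p\le 2$) instead of computing your $f_p$ explicitly. The circle minimization you flag as the main obstacle does close: $\min_w I_1/z^2 = 2+\tfrac{(p-2)(p+2)}{2}\alpha_1^2+\tfrac{(p-2)^2}{2}\alpha_1^4$ is monotone in $\alpha_1^2\in[0,1]$ for every $p\ge1$, so the infimum is attained at an endpoint and equals $\min(2,1+(p-1)^2)$, exactly as you predicted.
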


\begin{proof}
We follow the proof strategy of \Cref{lem:ingredients}\ref{it:mixedC-S-K}, suitably adapting some technical details. 

We may choose an orthonormal basis of $\R^2$ for which the partial derivatives $u_1,u_2$ are orthogonal, and then choose an orthonormal basis of $\R^d$ in which $u_1 = (\alpha_1,0,\ldots,0)\in\R^d$ and $u_2 = (0,\alpha_2,0,\ldots,0)\in\R^d$. Denoting the desired constant as $\kappa(p) := \min(2, 1+(p-1)^2)$, the Kato inequality $\kappa(p) |\nabla |\nabla u|(0)|^2 \le |\nabla^2 u(0)|^2$ decouples into two separate inequalities: 
\begin{equation}\label{eq:kato-splitintotwo}
\begin{split}
\kappa(p) (\alpha_1 u_{11}^1 + \alpha_2 u_{12}^2)^2 
&\le (u_{11}^1)^2 + 2 (u_{12}^2)^2 + (u_{22}^1)^2,\\
\kappa(p)(\alpha_2 u_{22}^2 + \alpha_1 u_{12}^1)^2 
&\le (u_{22}^2)^2 + 2 (u_{12}^1)^2 + (u_{11}^2)^2.
\end{split}
\end{equation}
The (non-negative) terms $\sum_{i,j=1}^2\sum_{k=3}^d (u_{ij}^k)^2$ of $|\nabla^2 u (0)|^2$ will not appear on the right-hand side of the inequality \eqref{eq:df-optimal-Kato}. For the sake of clarity, we shall focus on proving the first inequality of \eqref{eq:kato-splitintotwo}. 

Note that the first equation of the $p$-harmonic system
\[
0 = u_{11}^1+u_{22}^1 + (p-2)\alpha_1(\alpha_1 u_{11}^1 + \alpha_2 u_{12}^2) 
\]
can be used to express $u_{22}^1$ in terms of $u_{11}^1,u_{12}^2$. Introducing auxiliary variables $x := u_{11}^1$, $y = u_{12}^2$ and then translating to rotated variables $z = \alpha_1 x + \alpha_2 y$, $w = \alpha_2 x - \alpha_1 y$, we transform the desired inequality to the form 
\begin{align*}
\kappa(p) z^2 
& \le x^2 + 2y^2 + (x+(p-2)\alpha_1 z)^2 \\
& = 2z^2 + 2w^2 - x^2 + (x+(p-2)\alpha_1 z)^2 \\
& = (2+p(p-2)\alpha_1^2) z^2 + 2(p-2)\alpha_1\alpha_2 zw + 2w^2.
\end{align*}
Now we need to distinguish between the two cases: 

\textsc{Case 1: $p > 2$, $\kappa(p) = 2$.} Here, the inequality simplifies further to 
\[
0 \le p\alpha_1^2 z^2 + 2\alpha_1\alpha_2 zw + \frac{2}{p-2} w^2.
\]
As a quadratic form in $z$ and $w$, the right-hand side has the discriminant $4 \alpha_1^2 (\alpha_2^2-\frac{2p}{p-2})$, which is nonpositive for all $\alpha_1,\alpha_2 \in [0,1]$ and $p > 2$. Hence, the inequality always holds. 

\textsc{Case 2: $1 \le p \le 2$, $\kappa(p) = 1+(p-1)^2$.} The inequality takes the form 
\[
(1+(p-1)^2) z^2 \le (2+p(p-2)\alpha_1^2) z^2 + 2(p-2)\alpha_1\alpha_2 zw + 2w^2, 
\]
which is easiest to analyse as a quadratic inequality in $p$. Indeed, it can be rewritten as $(1-\alpha_1^2)z^2 \cdot p^2 + ap + b \le 0$ (for some coefficients $a,b$). Since the leading coefficient $(1-\alpha_1^2)z^2$ is nonnegative, by convexity it is enough to consider the edge cases: 
$p=1$ and $p=2$. If only one recalls the equality $\alpha_1^2+\alpha_2^2 = 1$, both cases are trivial to check (the first one reduces to $(\alpha_2 z - \alpha_1 w)^2 + (1+\alpha_2^2)w^2 \ge 0$, the second one to $2w^2 \ge 0$). This finishes the proof. 
\end{proof}

\begin{example}
\label{ex:Kato-example}
For $p = 1+\sqrt{2}$ we have 
\[
\kappa(p,3,2) \le \frac{25+2\sqrt{2}}{14} \approx 1.9877 < 2 = \kappa(p,3,1),
\]
as seen on the following explicit example: 
\[
\nabla u(0) = 
\begin{bmatrix}
(1,0) \\
(0,1) \\
(0,0)
\end{bmatrix}, \quad
\nabla^2 u(0) = 
\begin{bmatrix}
(\alpha,0) & (0,\beta) & (0,0) \\
(0,\beta) & (\gamma,0) & (0,0) \\
(0,0) & (0,0) & (\gamma,0)
\end{bmatrix}
\]
with $\alpha = -15+28 \cdot \sqrt{2}$, $\beta = 1+14 \cdot \sqrt{2}$, $\gamma = -17$. Indeed, one can check that the $p$-harmonic system reduces to $\tfrac{p}{2} \alpha + \tfrac{p-2}{2} \beta + 2 \gamma = 0$, while $|\nabla |\nabla u|(0)|^2 = \frac{(\alpha+\beta)^2}{2}$ and $|\nabla^2 u(0)|^2 = \alpha^2 + 2\beta^2 + 2\gamma^2$.
\end{example}

Let us end this section by explaining how can this example (and similar ones) be obtained. If one fixes a unit vector $v \in \R^n \otimes \R^d$, then the optimization problem in \eqref{eq:elementary-optimal} is simply a quadratic optimization problem in terms of $w$, which can be solved using standard methods (either symbolically or numerically). For $\nabla u(0)$ as in Example \ref{ex:Kato-example} and $v := \frac{\nabla u(0)}{|\nabla u(0)|}$, the optimal $w \in \textrm{Sym}_n(\R^d)$ is the one described above. 

However, even for this particular choice of $p = 1+\sqrt{2}$, the optimal constant $\kappa(p,3,2)$ is actually slightly smaller than $\frac{25+2\sqrt{2}}{14} \approx 1.9877448$. By searching randomly through the space of possible $v$ (and optimizing $w$ for each choice), we found that $\kappa(p,3,3) \approx \kappa(p,3,2) \approx 1.9876817$. Moreover, our simulations suggest that the gap phenomenon $\kappa(p,n,2) < \kappa(p,n,1)$ persists in higher dimensions $n$, but it is only significant for $p$ near the critical exponent $p(n) = 1+\sqrt{n-1}$. We call it \emph{critical} because $\kappa(p,n,1) = 1+\frac{(p-1)^2}{n-1}$ for $p \le p(n)$ and $\kappa(p,n,1) = 2$ for $p \ge p(n)$, and the Kato inequality problem is substantially different in these two regimes. 

\appendix
\section{Explicit constant estimates}\label{s:constants}
In this section we derive the precise constant estimates that we were using in Step 3 of the proof of \Cref{th:regularity-near-n-1}.

\subsection{Hardt and Lin extension theorem with explicit constant}
We obtain an estimate of the constant in \cite[Theorem 6.2]{HLp} (see also \cite[p.556]{HardtKinderlehrerLin1986} and \cite[Lemma A.1]{HardtKinderlehrerLinstable}). We mention here that the idea of using retractions with analytic estimates in the proof of \cite[Theorem 6.2]{HLp} is due to Federer and Fleming \cite[Section 5 and 6]{FedererFleming} and was used by them in the proof of the isoperimetric inequality. 
\begin{theorem}\label{th:HL-constantestimate}
 Let $1<p< 4$ and $v\in W^{1,p}(B^3,\R^{4})$ be such that $|v(x)| = 1$ for a.e. $x\in \partial B^3$. Then there exists a map $u\in W^{1,p}(B^3,\S^3)$ such that 
 \[
  u\big\rvert_{\partial B^3} = v\big\rvert_{\partial B^3}
 \]
and 
\begin{equation}\label{eq:HL-constant}
 \int_{B^3} |\nabla u|^p \dif x \le C_{\HL}(p) \int_{B^3} |\nabla v|^p \dif x,
\end{equation}
where $C_{\HL}(p)=\frac{1}{(4-p) V_p(r)}$ and $V_p \coloneqq \int_0^1 t^3 (1+t)^{-p} \dif t$. In particular, for all $p\in[2,3]$ we have $C_{\HL}(p)\le C_{HL}(3) = \frac{8}{17-24\ln 2}$.
\end{theorem}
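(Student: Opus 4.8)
The plan is to follow the Federer--Fleming projection argument used by Hardt and Lin, tracking each constant carefully in the concrete case $n=3$, $\cN = \S^3$. The key idea is to realize the target-valued extension as a radial projection of the given $\R^4$-valued map $v$ away from a well-chosen center point $a \in B^4$ (more precisely, $a$ in the open unit ball of $\R^4$ not lying in the image of $v$ on a suitable measure-theoretic sense), composed with the nearest-point retraction onto $\S^3$. First I would recall that for a fixed center $a$, the projection $\pi_a(y) = \frac{y-a}{|y-a|}$ maps $\R^4 \setminus \{a\}$ onto $\S^3$ and satisfies the pointwise Lipschitz bound $|\nabla \pi_a(y)| \le \frac{C}{|y-a|}$; consequently, setting $u(x) := \pi_a(v(x))$ gives a map into $\S^3$ agreeing with $v$ on $\partial B^3$ (since $|v|=1$ there forces $\pi_a \circ v = v$ once $a$ is inside the ball) and obeying the chain-rule estimate
\[
 \int_{B^3} |\nabla u|^p \dx \le C^p \int_{B^3} \frac{|\nabla v|^p}{|v(x)-a|^p} \dx.
\]

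The heart of the argument is an averaging step: one integrates the right-hand side over centers $a$ ranging in a small ball, and shows that for a positive-measure set of $a$ the weight $|v(x)-a|^{-p}$ is controlled on average. Concretely, I would integrate over $a$ in the ball $B_r \subset \R^4$ (with a radius $r$ to be optimized) against normalized Lebesgue measure and apply Fubini, reducing matters to the elementary one-point estimate
\[
 \fint_{B_r} \frac{\dif a}{|v(x)-a|^p} \le \sup_{z} \fint_{B_r} \frac{\dif a}{|z-a|^p},
\]
where the supremum over $z$ is attained at the center of the ball because $|v(x)|$ may be assumed close to $1$. This is where the quantity $V_p(r) = \int_0^1 t^3 (1+t)^{-p}\dif t$ enters: passing to polar coordinates in $\R^4$ and rescaling yields exactly the radial integral $\int_0^r s^3 |s|^{-p}\,\dif s$ type expression, and the worst case (when the relevant singularity sits at distance comparable to $1+t$) produces the stated normalizing factor. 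Averaging then guarantees existence of at least one admissible center $a$ with
\[
 \int_{B^3} |\nabla u|^p \dx \le \frac{1}{(4-p)V_p(r)} \int_{B^3}|\nabla v|^p \dx,
\]
which is the claimed bound with $C_{\HL}(p) = \frac{1}{(4-p)V_p(r)}$.

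For the final explicit estimate I would fix the optimal radius and compute $V_p$ at the endpoint $p=3$, where the integral $\int_0^1 t^3(1+t)^{-3}\dif t$ can be evaluated in closed form by the substitution $s = 1+t$, expanding $t^3 = (s-1)^3$ and integrating term by term; this produces the logarithmic term and yields $V_3 = \frac{17-24\ln 2}{8}$ after the arithmetic, hence $C_{\HL}(3) = \frac{8}{17-24\ln 2}$. To conclude $\sup_{p\in[2,3]} C_{\HL}(p) = C_{\HL}(3)$, I would verify monotonicity: as $p$ increases toward $3$ the factor $(4-p)$ decreases and the weight $(1+t)^{-p}$ shrinks $V_p$, so both effects push $C_{\HL}(p)$ upward, giving the supremum at $p=3$.

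The main obstacle I anticipate is the averaging/good-center step: one must ensure that the chosen $a$ simultaneously lies outside the (measure-zero but delicate) image singularities of $v$ so that $u = \pi_a \circ v$ is a genuine $W^{1,p}$ map, and that the energy bound holds for a \emph{concrete} center rather than merely in average, which is the standard but subtle part of the Federer--Fleming method. The optimization over $r$ and the sharp evaluation of $V_3$ are routine by comparison, as is checking that $p$ stays in the admissible range $1<p<4$ so that all the integrals converge.
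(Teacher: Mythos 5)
There is a genuine gap at the very first step: you assert that for $a$ inside the unit ball, $|v(x)|=1$ on $\partial B^3$ ``forces $\pi_a \circ v = v$'' on the boundary. This is false for $a \neq 0$: the radial projection $\pi_a(y) = \frac{y-a}{|y-a|}$ maps $\S^3$ to $\S^3$ but is not the identity there (it fixes $y\in\S^3$ only when $y-a$ is a positive multiple of $y$). So the map $u = \pi_a \circ v$ does not satisfy the required boundary condition $u|_{\partial B^3} = v|_{\partial B^3}$. The paper fixes this by observing that $\pi_a|_{\S^3}$ is a diffeomorphism $P_a$ of $\S^3$ and taking $u = P_a^{-1}\circ \pi_a \circ v$; the correction costs a Lipschitz factor, computed exactly to be $Q_a = 1+|a|$. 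This omission is not cosmetic: it is precisely the source of the constant you are trying to reproduce. In the paper, the weight $(1+|a|)^{-p}$ is inserted into the averaging over $a\in B_1^4$ to cancel the factor $Q_a^p = (1+|a|)^p$, and the normalizing mass of that weight, $\int_{B_1^4}(1+|a|)^{-p}\,\dif a = |\S^3|\int_0^1 t^3(1+t)^{-p}\,\dif t = |\S^3| V_p$, is exactly where $V_p$ comes from. Your explanation of $V_p$ (``the worst case when the relevant singularity sits at distance comparable to $1+t$'') does not correspond to any actual computation and appears reverse-engineered from the statement; without the boundary correction there is no mechanism producing $(1+t)^{-p}$. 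Likewise, the ``optimize over $r$'' step is a red herring --- the averaging is over the full ball $B_1^4$, and the paper explicitly remarks that shrinking the ball does not help (the $r$ in $V_p(r)$ in the statement is a typo).

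The parts of your outline that do match the paper are: the chain-rule bound (which in fact holds with constant $C=1$, i.e.\ $|\nabla(\pi_a\circ v)|^2 \le |v-a|^{-2}|\nabla v|^2$, not merely $|\nabla \pi_a|\le C/|y-a|$), the Fubini/pigeonhole selection of a good center via $\int_{B_1^4}|z-a|^{-p}\,\dif a \le \int_{B_1^4}|a|^{-p}\,\dif a = \frac{|\S^3|}{4-p}$, the closed-form evaluation $V_3 = \frac{17-24\ln 2}{8}$, and the monotonicity argument giving $\sup_{p\in[2,3]}C_{\HL}(p)=C_{\HL}(3)$. But as written, your construction neither satisfies the boundary condition nor yields the claimed constant; you would only recover the cruder bound $\frac{4}{4-p}\int|\nabla v|^p$ for a map with the wrong trace.
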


\begin{remark}
An even more explicit, but slightly worse, constant in \eqref{eq:HL-constant} is given by $\widetilde{C}_{\HL}(p) = \frac{2^{p+2}}{4-p}$, see \eqref{eq:worseconstant}.
\end{remark}

\begin{proof}[Proof of \Cref{th:HL-constantestimate}]
 Let $a$ be a point in the $4$-dimensional ball $B_{1}^{4}$. We consider the family of maps
 \begin{equation}\label{eq:definicjaua}
  \tilde u_a(x) = \frac{v(x) - a}{|v(x)-a|}, \quad x\in B^3.
 \end{equation}
For any $i\in\{1,2,3\}$ we compute 
\begin{equation}
\pl_{x_i} \tilde u_a 
= |v-a|^{-1} \pl_{x_i} v - |v-a|^{-3} \la v-a, \pl_{x_i} v \ra (v-a),
\end{equation}
which gives
\begin{equation}\label{eq:partialderivativessquared}
 \begin{split}
|\pl_{x_i} \tilde u_a|^2 &= |v-a|^{-2} |\pl_{x_i} v|^2 + |v-a|^{-6} |\la v-a, \pl_{x_i} v \ra|^2 |v-a|^2 - 2 |v-a|^{-4} |\la v-a, \pl_{x_i} v \ra|^2 \\
&= |v-a|^{-2} |\pl_{x_i} v|^2 - |v-a|^{-4} |\la v-a, \pl_{x_i} v \ra|^2 \\
&\le |v-a|^{-2} |\pl_{x_i} v|^2.
 \end{split}
\end{equation}
Summing up \eqref{eq:partialderivativessquared} over $i\in \{1,2,3\}$ we obtain $|\nabla \tilde u_a|^2 \le |v-a|^{-2} |\nabla v|^2$ and thus
\begin{equation}\label{eq:nablauatop}
 |\nabla \tilde u_a(x)|^p
 \le \frac{|\nabla v(x)|^p}{|v(x)-a|^p}.
\end{equation}
Now we note that for any $z\in \R^{4}$, 
\begin{equation}\label{eq:anestimate}
 \int_{B_1^{4}}\frac{1}{|z-a|^p} \dif a 
 \le \int_{B_1^{4}}\frac{1}{|a|^p} \dif a 
 =  \frac{\abs{\S^3}}{4-p}.
\end{equation}
To see why the inequality above holds, one needs to write the two integrals in terms of measures of corresponding superlevel sets. One then observes a pointwise inequality: for each $t > 0$ the superlevel set $\{ x \in B_1 : |z-a|^{-p} > t \}$ has smaller measure than $\{ x \in B_1 : |a|^{-p} > t \}$. 

Combining \eqref{eq:nablauatop} with \eqref{eq:anestimate} we obtain
\begin{equation}\label{eq:integraloversomething}
 \int_{B_1^{4}} \int_{B^3} |\nabla \tilde u_a|^p \dif x \dif a 
 \le \int_{B^3} \int_{B_1^{4}} \frac{|\nabla v(x)|^p}{|v(x)-a|^p} \dif a \dif x 
 \le \frac{\abs{\S^3}}{4-p} \int_{B^3} |\nabla v|^p \dif x.
\end{equation}
Since $|B^4_1| = \frac 14 \abs{\S^3}$, we can choose a point $a_0\in B_{1}^{4}$ for which 
\begin{equation}\label{eq:choiceofa0}
 \int_{B^3} |\nabla \tilde u_{a_0}|^p \dif x 
 \le \frac{\abs{\S^3}}{4-p} |B^4_1|^{-1} \int_{B^3} |\nabla v|^p \dif x 
 = \frac{4}{4-p} \int_{B^3} |\nabla v|^p \dif x.
\end{equation}
When restricted to $\partial B^3$, the map $\tilde u_{a_0}$ constructed above is $ P_{a_0} \circ v$, where 
\[
  P_{a_0}\colon \S^3\to \S^{3}, \qquad
  P_{a_0}(z)\coloneqq \frac{z - a_0}{|z -a_0|}. 
\]
This differs from $v|_{\pl B^3}$. To correct it, we note that $P_{a_0}\colon \S^3 \to \S^3$ is a smooth diffeomorphism (as long as $a_0$ lies in the interior of $B^4$) and so we can set 
\[
 \tilde u \coloneqq P_{a_0}^{-1} \circ \tilde u_{a_0}.
\]
Now $\nabla \tilde u = \nabla P_{a_0}^{-1} \cdot \nabla \tilde u_a$, so 
\begin{equation}\label{eq:estimatewithQa0}
|\nabla \tilde u| \le Q_{a_0} |\nabla \tilde u_{a_0}|, 
\end{equation}
where the constant $Q_{a_0}$ can be characterized as 
\[
Q_{a_0} \coloneqq \sup_{\substack{b \in \S^3, \\ v \in T_b \S^3}} \frac{|\nabla P_{a_0}^{-1}(b) \cdot v|}{|v|}
= \sup_{\substack{q \in \S^3, \\ w \in T_q \S^3}} \frac{|w|}{|\nabla P_{a_0}(q) \cdot w|} 
= \left( \inf_{\substack{q \in \S^3, \\ w \in T_q \S^3}} \frac{|\nabla P_{a_0}(q) \cdot w|}{|w|} \right)^{-1}.
\]
By a straightforward calculation we have 
\[
 |\nabla P_{a_0}(q) \cdot w|^2 = |q-{a_0}|^{-2} \left( |w|^2 - \left( \frac{q-{a_0}}{|q-{a_0}|} \cdot w \right)^2 \right),
\]
which can be interpreted as 
\[
\frac{|\nabla P_{a_0}(q) \cdot w|}{|w|}
= \frac{\cos(\alpha)}{|q-{a_0}|},
\]
with $\alpha$ denoting the angle between the vector $w$ and the subspace $(q-{a_0})^\perp$. For $w \in T_q \S^3$, the maximal such angle is the angle $\angle {a_0}q0$ in \Cref{picture}.

\begin{figure}
\begin{center}
\begin{tikzpicture}
\def\picR{2}
\def\nodeR{1.5pt}
\def\nodeA{-160}
\path 
	coordinate (o) at (0,0)
	coordinate (a) at (0,0.6*\picR)
	coordinate (q) at (\nodeA:\picR);
\draw (0,0) circle (\picR);
\begin{scope}
	\draw[clip] (o)--(q)--(a);
	\draw (q) circle (0.5*\picR);
\end{scope}
\node[above right = 0.2*\picR and 0.4*\picR of q] {$\alpha$};
\foreach \p in {o,a,q}
	\fill (\p) circle (\nodeR);
\node[right=\nodeR of o] {$0$};
\node[right=\nodeR of a] {$a_0$};
\node[below left=\nodeR of q] {$q$};
\draw[->] (q) -- ++({\nodeA-90}:1.3) node[left] {\tiny{$w\in T_q \S^3$}};
\end{tikzpicture}
\end{center}
\caption{Computation of $Q_{a_0}$}
\label{picture}
\end{figure}
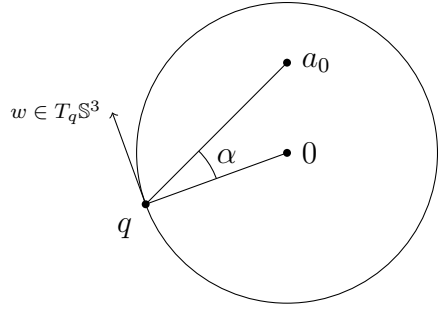
By the law of cosines, we have $\cos \alpha = \frac{|q-{a_0}|^2+1-|{a_0}|^2}{2|q-{a_0}|}$ and hence for fixed $q \in \S^3$, the minimal value of $\frac{\abs{P_{a_0}(q) \cdot w}}{|w|}$ is $\frac{|q-{a_0}|^2+1-|{a_0}|^2}{2|q-{a_0}|^2}$. Note that this is a decreasing function of $|q-{a_0}|$, so its minimum over $q \in \S^3$ is achieved when $q = -\frac{{a_0}}{|{a_0}|}$ and $|q-{a_0}|=1+|{a_0}|$. Thus, 
\begin{equation}\label{eq:Qa0estimate}
Q_{a_0} = \left( \frac{(1+|{a_0}|)^2+1-|{a_0}|^2}{2(1+|{a_0}|)^2} \right)^{-1} = 1+|{a_0}|.
\end{equation}
Since $a_0\in B_1^4$, we have $1+|{a_0}|\le 2$. Finally, from \eqref{eq:estimatewithQa0}, \eqref{eq:choiceofa0}, and \eqref{eq:Qa0estimate} we have
\begin{equation}\label{eq:worseconstant}
 \int_{B^3} |\nabla \tilde u|^p \dif x 
 \le \frac{2^{p+2}}{4-p} \int_{B^3} |\nabla v|^p \dif x
\end{equation}
and thus, we obtain inequality \eqref{eq:HL-constant} with a constant
\begin{equation}\label{eq:worseconstantdef}
\widetilde C_{\HL}(p) = \frac{2^{p+2}}{4-p}, 
\quad \text{in particular, } \quad 
\widetilde C_{\HL}(2) = 8, \quad \widetilde C_{\HL}(3) = 32.
\end{equation}

However, the constant $\widetilde C_{\HL}(p)$ may be still improved with only a little bit more care. Let us denote 
 \[
  u_a(x) = P_{a}^{-1} \left( \frac{v(x) - a}{|v(x)-a|} \right), \quad x\in B^3.
 \]
This map already agrees with $v$ on $\S^3$. According to previous calculations, i.e., \eqref{eq:nablauatop}, \eqref{eq:estimatewithQa0}, and \eqref{eq:Qa0estimate} we have the pointwise bound 
\[
|\nabla u_a(x)| 
\le (1+|a|)  \frac{|\nabla v(x)|}{|v(x)-a|}.
\]
Introducing the weight $\omega(a) \coloneqq (1+|a|)^{-p}$ to eliminate the power of $1+|a|$ appearing in the calculations, we then have (arguing as in \eqref{eq:integraloversomething})
\begin{equation}
 \int_{B_1^{4}} \int_{B^3} |\nabla u_a|^p \dif x\, \omega(a) \dif a 
 \le \int_{B^3} \int_{B_1^{4}} \frac{|\nabla v(x)|^p}{|v(x)-a|^p} \dif a \dif x 
 \le \frac{|\S^3|}{4-p} \int_{B^3} |\nabla v|^p \dif x.
\end{equation}
Denoting $V_p \coloneqq \int_0^1 t^3 (1+t)^{-p} \dif t$, we can now choose a point $a_0 \in B_{1}^{4}$ for which 
\begin{equation}
 \int_{B^3} |\nabla u_{a_0}|^p \dif x 
 \le C_{\HL}(p) \int_{B^3} |\nabla v|^p \dif x, 
\end{equation}
where $C_{\HL}(p)=\frac{1}{(4-p) V_p}$, which is clearly increasing in $p$. For $p=2,3$ the explicit formulas for $V_2$ and $V_3$ can be found. We obtain
\begin{equation}
C_{\HL}(2) = \frac{1}{6\ln 2 - 4} \approx 6.29,
\qquad
C_{\HL}(3) = \frac{8}{17-24\ln 2} \approx 21.95.
\end{equation}
\end{proof}

\begin{remark}
 Restricting $a$ to some smaller ball $B_r^4$ instead of $B_1^4$ in \eqref{eq:definicjaua} does not lead to an improved constant.
\end{remark}

\subsection{Poincar\'{e} inequality on a sphere with explicit constant.}
From \cite{Valtorta12} we have the following. 
\begin{theorem}[\cite{Valtorta12}]\label{th:ValtortaPoincare}
 If $f\in W^{1,p}(\S^2)$ satisfies $\int_{\S^2}|f|^{p-2}f=0$, then
 \begin{equation}\label{eq:poincare-scalar}
  \int_{\S^2}|f|^p \le C_{P:sc}(p) \int_{\S^2} |\nabla f|^p,
 \end{equation}
 where $C_{P:sc}(p)=\frac{p^p \sin^p(\frac{\pi}{p})}{2^p(p-1)}$.
\end{theorem}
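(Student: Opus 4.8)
The plan is to recognize \Cref{th:ValtortaPoincare} as a sharp lower bound on the first nonzero eigenvalue of the $p$-Laplacian on $\S^2$ and to prove it by the gradient comparison method of Bakry--Qian and Valtorta (the $p$-Laplacian analogue of the Zhong--Yang/Kr\"oger improvement of Payne--Weinberger). First I would observe that the optimal constant in \eqref{eq:poincare-scalar} is exactly $1/\lambda_{1,p}$, where
\[
\lambda_{1,p} = \inf\Big\{ \tfrac{\int_{\S^2}|\nabla f|^p}{\int_{\S^2}|f|^p} : f \in W^{1,p}(\S^2),\ f\not\equiv 0,\ \int_{\S^2}|f|^{p-2}f = 0 \Big\}.
\]
Unwinding the generalized constant $\pi_p = \frac{2\pi}{p\sin(\pi/p)}$ one checks $C_{P:sc}(p)^{-1} = (p-1)(\pi_p/\pi)^p$, so \eqref{eq:poincare-scalar} is equivalent to $\lambda_{1,p} \ge (p-1)(\pi_p/\pi)^p$. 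This is precisely the diameter estimate for a manifold with $\operatorname{Ric}\ge 0$ and diameter equal to that of $\S^2$, namely $\pi$; notably, the positive curvature of $\S^2$ enters only through $\operatorname{Ric}\ge 0$.

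To prove the eigenvalue bound I would first secure a minimizer: by the compact embedding $W^{1,p}(\S^2)\hookrightarrow L^p(\S^2)$ the infimum is attained by some $u$, which solves $-\operatorname{div}(|\nabla u|^{p-2}\nabla u) = \lambda_{1,p}|u|^{p-2}u$ weakly. Testing this equation with the constant function and invoking the side condition $\int_{\S^2}|u|^{p-2}u = 0$ shows that the Lagrange multiplier attached to the constraint vanishes, so $u$ is a genuine eigenfunction; standard $p$-Laplacian regularity then gives $u\in C^{1,\alpha}$. After rescaling I would match the range of $u$ to a one-dimensional model, namely the flat Neumann problem $-(|w'|^{p-2}w')' = \lambda_{1,p}|w|^{p-2}w$ on an interval, whose first nonzero eigenvalue on a length-$L$ interval is $(p-1)(\pi_p/L)^p$ and whose eigenfunctions are built from the generalized $\sin_p$.

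The heart of the argument --- and the step I expect to be the main obstacle --- is the \emph{gradient comparison}: the pointwise bound $|\nabla u(x)| \le w'\big(w^{-1}(u(x))\big)$. This is established by applying the maximum principle to a suitable $P$-function comparing $|\nabla u|$ with the model gradient, where the Bochner--Bakry--\'Emery formula for the $p$-Laplacian is invoked and the hypothesis $\operatorname{Ric}\ge 0$ forces the curvature term to have the favorable sign. The genuine difficulties are the degeneracy of the $p$-Laplacian at critical points of $u$ (where $|\nabla u|$ vanishes and the linearization becomes singular) and the careful calibration of the comparison model; this is exactly the technical core of \cite{Valtorta12} (and of its Naber--Valtorta refinement).

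Granting the gradient comparison, the conclusion is short. Let $x_{\min},x_{\max}$ be points where $u$ attains its minimum and maximum, and integrate $\frac{|\nabla u|}{w'(w^{-1}(u))}\le 1$ along a minimizing geodesic joining them; this bounds the model interval length by $\operatorname{dist}(x_{\min},x_{\max})\le \operatorname{diam}(\S^2)=\pi$. Since $(p-1)(\pi_p/L)^p$ is decreasing in $L$, the constraint $L\le\pi$ yields $\lambda_{1,p} = (p-1)(\pi_p/L)^p \ge (p-1)(\pi_p/\pi)^p = C_{P:sc}(p)^{-1}$, which is exactly $\int_{\S^2}|f|^p \le C_{P:sc}(p)\int_{\S^2}|\nabla f|^p$. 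Because the gradient comparison is delicate, in the paper itself it is cleanest to cite \cite{Valtorta12} for the eigenvalue estimate and merely record the arithmetic identifying its constant with $C_{P:sc}(p)$.
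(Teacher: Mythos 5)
Your proposal is correct and matches the paper's treatment: the paper offers no proof of this statement, simply importing it from \cite{Valtorta12}, which is exactly the sharp first-eigenvalue estimate $\lambda_{1,p}\ge (p-1)(\pi_p/d)^p$ for $\mathrm{Ric}\ge 0$ and diameter $d=\pi$ that you describe, and your arithmetic identifying $(p-1)(\pi_p/\pi)^p$ with $C_{P:sc}(p)^{-1}$ checks out. Your outline of Valtorta's gradient-comparison argument (and your decision to defer its technical core to the citation) is faithful, so there is nothing further to add.
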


\begin{corollary}\label{cor:vectorialpoincare}
 Let $u\in W^{1,p}(\S^2,\S^3)$ for $p\in[2,3]$, then there is $\bar u \in \R^4$ such that
 \begin{equation}\label{eq:poincare-vectorial}
  \int_{\S^2} |u - \bar u|^p \le C_{P}(p) \int_{\S^2} |\nabla u|^p ,
 \end{equation}
 where $C_P(p) = \frac{p^p \sin^p(\frac{\pi}{p})}{4(p-1)}$. In particular $\sup_{p\in[2,3]} C_P(p) = C_{P}(3) =\frac{3^{9/2}}{2^6}$.
\end{corollary}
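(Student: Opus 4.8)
The plan is to reduce the vectorial inequality to the scalar Poincaré inequality of \Cref{th:ValtortaPoincare}, applied to each of the four components of $u=(u^1,u^2,u^3,u^4)\colon\S^2\to\S^3\subset\R^4$, and then to reassemble the pieces while tracking the constant carefully.

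First I would fix, for each component $a\in\{1,2,3,4\}$, a constant $\bar u^a\in\R$ so that $f^a:=u^a-\bar u^a$ satisfies the orthogonality condition $\int_{\S^2}|f^a|^{p-2}f^a=0$ required by \Cref{th:ValtortaPoincare}. Such a constant exists because the map $c\mapsto\int_{\S^2}|u^a-c|^{p-2}(u^a-c)$ is continuous (the integrand is uniformly bounded, as $|u^a|\le1$), tends to $\mp\infty$ as $c\to\pm\infty$, and hence has a zero by the intermediate value theorem; for $p\ge2$ it is in fact strictly decreasing, so $\bar u^a$ is unique. Setting $\bar u:=(\bar u^1,\dots,\bar u^4)$ and applying the scalar inequality to each $f^a$ gives $\int_{\S^2}|u^a-\bar u^a|^p\le C_{P:sc}(p)\int_{\S^2}|\nabla u^a|^p$, since $\nabla f^a=\nabla u^a$.

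Next I would sum over $a$ and pass between the Euclidean ($\ell^2$) and $\ell^p$ norms on $\R^4$, using two elementary power-mean inequalities valid for $p\ge2$. On the left, $|u-\bar u|^p=\big(\sum_a|u^a-\bar u^a|^2\big)^{p/2}\le 4^{\,p/2-1}\sum_a|u^a-\bar u^a|^p$, which is the sharp embedding $\|\cdot\|_2\le 4^{\,1/2-1/p}\|\cdot\|_p$ on $\R^4$. On the gradient side the opposite comparison $\sum_a|\nabla u^a|^p\le\big(\sum_a|\nabla u^a|^2\big)^{p/2}=|\nabla u|^p$ holds because $\|\cdot\|_p\le\|\cdot\|_2$ for $p\ge2$. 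Chaining these with the componentwise inequalities yields
\[
\int_{\S^2}|u-\bar u|^p\le 4^{\,p/2-1}\,C_{P:sc}(p)\int_{\S^2}|\nabla u|^p,
\]
and since $4^{\,p/2-1}=\tfrac{2^p}{4}$ the combined constant is exactly $\tfrac{2^p}{4}\,C_{P:sc}(p)=\tfrac{2^p}{4}\cdot\tfrac{p^p\sin^p(\pi/p)}{2^p(p-1)}=\tfrac{p^p\sin^p(\pi/p)}{4(p-1)}=C_P(p)$, as claimed.

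For the final assertion I would compute $C_P(3)=\tfrac{3^3(\sqrt3/2)^3}{4\cdot2}=\tfrac{3^{9/2}}{2^6}$ directly, and verify that $p\mapsto C_P(p)$ is non-decreasing on $[2,3]$ by logarithmic differentiation (the term $p\mapsto p\sin(\pi/p)$ increases from $2$ to $\tfrac{3\sqrt3}{2}$ there, and one checks $\tfrac{d}{dp}\log C_P=\log g(p)+p\,g'(p)/g(p)-(p-1)^{-1}>0$ with $g(p)=p\sin(\pi/p)$), so that the supremum is attained at $p=3$. The only genuinely non-cosmetic points are the existence of the component constants $\bar u^a$ via the nonlinear orthogonality condition rather than a plain mean, and the use of the \emph{sharp} $\ell^2$–$\ell^p$ constant $4^{\,1/2-1/p}$: it is precisely this value that makes the factor $2^p$ cancel against the $2^{-p}$ in $C_{P:sc}(p)$, whereas any cruder comparison would spoil the stated constant.
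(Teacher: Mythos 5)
Your proposal is correct and follows essentially the same route as the paper: choose $\bar u^a$ componentwise so that the nonlinear orthogonality condition of \Cref{th:ValtortaPoincare} holds (the paper obtains it as the first-order condition for the minimizer of $c\mapsto\int_{\S^2}|u^a-c|^p$, you via the intermediate value theorem --- the same object), then apply the scalar inequality to each component and combine with the sharp $\ell^2$--$\ell^p$ comparisons $\|\cdot\|_{\ell^2}\le 4^{1/2-1/p}\|\cdot\|_{\ell^p}$ and $\|\cdot\|_{\ell^p}\le\|\cdot\|_{\ell^2}$, which makes the factors of $2^p$ cancel exactly as in the paper's computation $2^{p-2}C_{P:sc}(p)=C_P(p)$. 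Your additional verification that $C_P$ is increasing on $[2,3]$ is a welcome detail the paper asserts without proof.
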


\begin{proof}
Since $\S^3\subset\R^4$ we write $u=(u^1,u^2,u^3,u^4)$. It is not difficult to see that for each $i \in \{1,2,3,4\}$ there is $\bar u^i \in \R$ which minimizes $\int_{\S^2} |u^i - \bar u^i|^p$, and for such $\bar u^i$ we have $\int_{\S^2} |u^i - \bar u^i|^{p-2}(u^i - \bar u^i)=0$.

We have by \eqref{eq:poincare-scalar} on each component
\[
 \begin{split}
  \int_{\S^2} |u - \bar u|^p &= \int_{\S^2} \brac{\sum_{i=1}^4(u^i - \bar u^i)^2}^{\frac p2} \le 4^{\frac p2 -1} \sum_{i=1}^4 \int_{\S^2} |u^i - \bar u^i|^p\\
  &\le 2^{p -2}C_{P:sc}(p)\sum_{i=1}^4 \int_{\S^2} |\nabla u^i|^p \le 2^{p-2}C_{P:sc}(p)\int_{\S^2}|\nabla u|^p,
 \end{split}
\]
where in the last inequality we used the elementary inequality $\|x\|_{\ell^p}\le \|x\|_{\ell^2}$ for $p\ge 2$.
\end{proof}

\subsection{Trace inequality with explicit constant}
We will establish a basic trace inequality 
\[
\forall \, u \colon \S^2 \to \R^4 \quad \exists \, v \colon \B^3 \to \R^4 \qquad 
v |_{\S^2} = u, \quad
\int_{\B^3} |\nabla v|^p \le C_T \int_{\S^2} |\nabla u|^p.
\]
with the constant $C_T$ in some explicit form.

\begin{theorem}\label{th:trace}
Assume that $p\in[2,3]$ and $u\in W^{1,p}(\S^2,\S^3)$. Then there exists a map $v\in W^{1,2}(B^3,\R^4)$ such that $v\big\rvert_{\partial B^3}= u$ in the trace sense and
\begin{equation}\label{eq:traceinequality}
\int_{B^3}|\nabla v|^p\le C_T(p) \int_{\S^2} |\nabla u|^p,
\end{equation}
where $C_T(p) = 2^{(p-2)/2} \frac{1+\alpha^p C_{P}(p)}{3+p(\alpha-1)}$, $\alpha = \frac{2^{5/3}}{3^{3/2}}$, and $C_{P}(p)$ is the constant from the Poincar\'{e} inequality on the sphere $\S^2$ ---  \Cref{cor:vectorialpoincare}. In particular, for all $p\in [2,3]$ we have $C_T(p)\le C_T(3) = \frac{3^{3/2}}{2^{13/6}}$.
\end{theorem}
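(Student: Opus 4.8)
The plan is to construct $v$ by radial interpolation between the boundary datum $u$ and a well-chosen constant, using a carefully tuned radial profile. Writing points of $B^3$ in polar coordinates $x = r\omega$ with $r \in [0,1]$ and $\omega \in \S^2$, I would set
\[
v(r\omega) \coloneqq \bar u + r^\alpha \big( u(\omega) - \bar u \big),
\]
where $\bar u \in \R^4$ is the vector supplied by the vectorial Poincar\'e inequality \Cref{cor:vectorialpoincare}, so that $\int_{\S^2}|u - \bar u|^p \le C_P(p)\int_{\S^2}|\nabla u|^p$. Since $v(1\cdot\omega) = u(\omega)$, the trace condition $v\big\rvert_{\partial B^3} = u$ holds. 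Because $\alpha > 0$ and (as the radial integral below forces) $3 + p(\alpha-1) > 0$ on $[2,3]$, the formal gradient is in $L^p$, and since $v$ is bounded the map is genuinely in $W^{1,p}(B^3,\R^4) \subset W^{1,2}(B^3,\R^4)$ for $p \ge 2$.

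The computation is then routine. Decomposing the gradient into radial and tangential parts, $|\nabla v|^2 = |\partial_r v|^2 + r^{-2}|\nabla_T v|^2$, the ansatz yields the clean factorization
\[
|\nabla v|^2 = r^{2\alpha - 2}\big( \alpha^2 |u - \bar u|^2 + |\nabla_T u|^2 \big),
\]
so that by Fubini the radial and angular integrals separate as
\[
\int_{B^3} |\nabla v|^p \dx
= \Big( \int_0^1 r^{p(\alpha-1)+2} \dif r \Big) \int_{\S^2} \big( \alpha^2 |u - \bar u|^2 + |\nabla_T u|^2 \big)^{p/2} \dif\mathcal{H}^2.
\]
The radial factor equals $\tfrac{1}{3 + p(\alpha-1)}$, accounting for the denominator of $C_T(p)$. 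For the angular factor I would apply the convexity bound $(a+b)^{p/2}\le 2^{p/2-1}(a^{p/2}+b^{p/2})$ (valid since $p/2 \ge 1$) with $a = \alpha^2|u-\bar u|^2$ and $b = |\nabla_T u|^2$, producing the prefactor $2^{(p-2)/2}$, and then invoke \Cref{cor:vectorialpoincare} to replace $\alpha^p\int_{\S^2}|u-\bar u|^p$ by $\alpha^p C_P(p)\int_{\S^2}|\nabla_T u|^p$. Assembling the three contributions gives exactly the asserted constant $C_T(p) = 2^{(p-2)/2}\tfrac{1 + \alpha^p C_P(p)}{3 + p(\alpha-1)}$, for any admissible exponent $\alpha$.

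The specific value $\alpha = 2^{5/3}/3^{3/2}$ is explained by optimizing the resulting constant at the endpoint $p=3$: using $C_P(3) = 3^{9/2}/2^6$, the quantity $C_T(3) = \tfrac{\sqrt 2}{3\alpha}\big(1 + \alpha^3 C_P(3)\big)$ is minimized over $\alpha$ precisely at $2\alpha^3 C_P(3) = 1$, i.e. $\alpha^3 = 2^5/3^{9/2}$, where $1 + \alpha^3 C_P(3) = \tfrac{3}{2}$ and one computes
\[
C_T(3) = \frac{\sqrt 2}{3\alpha}\cdot\frac{3}{2} = \frac{1}{\sqrt 2\,\alpha} = \frac{3^{3/2}}{2^{13/6}},
\]
matching the claimed endpoint value.

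The remaining task, the clause $\sup_{p\in[2,3]}C_T(p) = C_T(3)$, is where I expect the real work to lie. This is a genuine one-variable estimate in which the increasing factors $2^{(p-2)/2}$ and $(3+p(\alpha-1))^{-1}$ (recall $\alpha - 1 < 0$) compete against the behaviour of $1 + \alpha^p C_P(p)$, and the factor $\sin^p(\pi/p)$ hidden inside $C_P(p)$ makes the sign of $\frac{d}{dp}\log C_T(p)$ awkward to read off in closed form. I would establish monotonicity by showing $\frac{d}{dp}\log C_T(p) > 0$ on $[2,3]$, reducing after taking logarithms to elementary monotonicity of the individual factors and estimating the $\sin^p(\pi/p)$ term crudely, which should suffice given the comfortable margin available (numerically $C_T(2) \approx 0.62$ against $C_T(3) \approx 1.16$).
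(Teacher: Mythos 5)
Your construction is exactly the paper's: the same radial interpolation $v(r\omega)=\bar u+r^\alpha(u(\omega)-\bar u)$ toward the Poincar\'e mean, the same splitting of $|\nabla v|^2$ into radial and tangential parts, the same convexity bound giving $2^{(p-2)/2}$, the same radial integral $\tfrac{1}{3+p(\alpha-1)}$, and the same application of \Cref{cor:vectorialpoincare}, with the value $\alpha=2^{5/3}/3^{3/2}$ arising from minimizing $C_T(3,\alpha)$ just as in the paper (which only asserts this choice, whereas you derive it). Your correctly computed endpoint value and your honest flagging of the monotonicity claim $\sup_{p\in[2,3]}C_T(p)=C_T(3)$ --- which the paper likewise leaves as ``can be checked'' --- complete an argument that matches the paper's proof in every essential respect.
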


\begin{proof}
For $\alpha > 0$ to be chosen later, let us define 
 \[
 v(x) \coloneqq |x|^\alpha u(x/|x|) + (1-|x|^\alpha) \ov{u}
 \quad \text{for all } x\in B^3,
 \]
where $\ov{u}\in\R^4$ is the constant required in the Poincar\'{e} inequality, see \Cref{cor:vectorialpoincare}. The map $v$ interpolates between $u(x)$ and $\ov{u}$ on each ray $\{ tx : t \in [0,1] \}$. We compute the energy of $v$:
\begin{align*}
|\nabla v(x)|^2 
& = |x|^{2(\alpha-1)} \left( \alpha^2 |u(x/|x|)-\ov{u}|^2 + |\nabla u(x/|x|)|^2 \right),
\end{align*}
hence, 
\begin{equation}\label{eq:tracewithalpha}
\begin{split}
\int_{\B^3} |\nabla v(x)|^p \dx 
& \le 2^{(p-2)/2} \int_{\B^3} |x|^{p(\alpha-1)} \brac{\alpha^p |u(x/|x|)-\ov{u}|^p + |\nabla u(x/|x|)|^p} \dx \\
& = 2^{(p-2)/2} \int_0^1 r^{2+p(\alpha-1)} \dd r \int_{\S^2} \brac{\alpha^p |u(\xi)-\ov{u}|^p + |\nabla u(\xi)|^p} \dif \mathcal H^2(\xi) \\
& \le 2^{(p-2)/2} \frac{1+\alpha^p C_{P}(p)}{3+p(\alpha-1)} \int_{\S^2} |\nabla u(\xi)|^p \dif \mathcal H^2(\xi). 
 \end{split}
\end{equation}
Let us denote $C_T(p,\alpha)\coloneqq 2^{(p-2)/2} \frac{1+\alpha^p C_{P}(p)}{3+p(\alpha-1)}$ for $p\in[2,3]$ and $\alpha\in(0,1]$. It can be checked that $C_T(p,\alpha)$ is an increasing function of $p$ and that $\inf_{\alpha\in(0,1]} C_T(3,\alpha)$ is achieved for $\alpha = \frac{2^{5/3}}{3^{3/2}}$. Hence, we conclude with this choice of $\alpha$ in \eqref{eq:tracewithalpha}.
\end{proof}

\end{document}